\newtheorem{theorem}{Theorem}[section]
\newtheorem{proposition}[theorem]{Proposition}
\newtheorem{lemma}[theorem]{Lemma}
\newtheorem*{notation}{Notation}
\theoremstyle{definition}
\newtheorem{definition}[theorem]{Definition}
\newtheorem{question}{Question}
\theoremstyle{remark}
\numberwithin{equation}{section}
\newcommand{\abs}[1]{\lvert#1\rvert}
\newcommand\Rp{\mathbb{R}^+}
\newcommand\N{\mathbb N}
\newcommand\No{{\mathbb{N}_0}}
\newcommand\C{\mathbb C}
\newcommand\Z{\mathbb Z}
\newcommand\pe[1]{\varepsilon_{#1}}
\newcommand\dist{\mathrm{dist}}
\newcommand\ob[2]{\mathrm{B}(#1,#2)}
\newcommand\cb[2]{\bar{\mathrm{B}}(#1,#2)}
\newcommand\scs{\mathcal F}
\newcommand\ar{\rho_{\rm ann}}
\DeclareMathOperator\inte{int} 
\begin{document}

\title{Swiss Cheeses and Their Applications}

\author{J. F. Feinstein}
\address{School of Mathematical Sciences, University of Nottingham, University Park, Nottingham, NG7 2RD, UK}
\email{joel.feinstein@nottingham.ac.uk}

\author{S. Morley}
\address{School of Mathematical Sciences, University of Nottingham, University Park, Nottingham, NG7 2RD, UK}
\email{pmxsm9@nottingham.ac.uk}
\thanks{The second author was supported by a grant from the EPSRC}

\author{H. Yang}
\address{School of Mathematical Sciences, University of Nottingham, University Park, Nottingham, NG7 2RD, UK}
\email{pmxhy1@nottingham.ac.uk}
\thanks{The third author was supported by a China Tuition Fee Research Scholarship and the School of Mathematical Sciences at the University of Nottingham.}

\subjclass[2000]{Primary 46J10; Secondary 54H99}
\date{\today}


\keywords{Swiss cheeses, rational approximation, uniform algebras, derivations, weak amenability}

\begin{abstract}
Swiss cheese sets have been used in the literature as useful examples in the study of rational approximation and uniform algebras. In this paper, we give a survey of Swiss cheese constructions and related results. We describe some notable examples of Swiss cheese sets in the literature. We explain the various abstract notions of Swiss cheeses, and how they can be manipulated to obtain desirable properties. In particular, we discuss the Feinstein-Heath classicalisation theorem and related results. We conclude with the construction of a new counterexample to a conjecture of S. E. Morris, using a classical Swiss cheese set.
\end{abstract}

\maketitle
\section{Introduction}
In this introduction, we briefly discuss some notions which will be explained in more detail later, and we describe the structure of the paper. It is a classical problem of approximation theory to determine when continuous functions on a compact subset of the complex plane can be uniformly approximated by rational functions. As an extension of this problem, we often consider the algebra of those continuous functions which can be approximated in this way. Examples of compact subsets of the complex plane on which this algebra has interesting properties are obtained by deleting a sequence of carefully chosen open disks from a closed disk. We call such sets {\em Swiss cheese sets}. We are usually interested in Swiss cheese sets where the sum of the radii of the deleted open disks is finite. In this case, if the deleted open disks and the complement of the closed disk have pairwise disjoint closures, then we call the set a {\em classical} Swiss cheese set. In the literature, a Swiss cheese traditionally refers to a compact subset of the plane; whereas, in \cite{feinheath2010,FMY,mason2010} and the current paper, we consider a Swiss cheese to be an underlying object to which we associate a Swiss cheese set.

In general, a Swiss cheese set might have some undesirable topological properties, such as the existence of isolated points. However, Feinstein and Heath \cite{feinheath2010} described a process by which, under certain conditions, we can improve the topological properties of the set. We call this process ``classicalisation''. We discuss the desirable properties of classical Swiss cheese sets in Section \ref{Swiss_cheese_sec}.

In \cite{feinheath2010}, Feinstein and Heath considered a {\em Swiss cheese} to be a pair consisting of a closed disk and a countable collection of open disks. A Swiss cheese set is associated to such an object by deleting the union of the collection of open disks from the closed disk. They then used Zorn's lemma to prove the Feinstein-Heath classicalisation theorem. In \cite{mason2010}, Mason used transfinite induction to prove the classicalisation theorem. His proof used maps, which he called ``disk assignment functions'', which can also be used to describe Swiss cheese sets. In \cite{FMY}, the current authors considered sequences of pairs, consisting of a complex number and a non-negative real number, each of which corresponds to the centre and radius of a disk in the plane. Using these sequences, which we call ``abstract Swiss cheeses'', we gave a topological proof of the Feinstein-Heath classicalisation theorem, and some related results.

In this paper we describe numerous examples of Swiss cheese sets from the literature, and outline some classicalisation results from \cite{feinheath2010,FMY,mason2010}. We also give a short comparison of the various abstract objects used to describe and manipulate Swiss cheese sets and the methods used to prove the Feinstein-Heath classicalisation theorem. We then give a proof of a ``semiclassicalisation'' theorem, where the open disks and the complement of the closed disk of the final Swiss cheese set are pairwise disjoint. This proof uses an inductive construction which terminates at the first infinite ordinal. We conclude with the construction of a classical Swiss cheese set which serves as a counterexample to a conjecture of S. E. Morris, and improves the example in \cite{feinstein2004}.

\section{Preliminaries}
We say {\em compact space} to mean a non-empty, compact, Hausdorff topological space and {\em compact plane set} to mean a non-empty, compact subset of $\C$. For a commutative, unital Banach algebra $A$ we denote the character space of $A$ by $\Phi_A$. Given the Gelfand topology, $\Phi_A$ is a compact space. Let $X$ be a compact space. We denote the algebra of continuous, complex-valued functions on $X$ by $C(X)$ and we give $C(X)$ the {\em uniform norm} on $X$ defined by
\[
\abs f_X:=\sup_{x\in X}\abs{f(x)}\qquad (f\in C(X)).
\]
A {\em uniform algebra on $X$} is a closed subalgebra $A$ of $C(X)$ which contains all constant functions and for each $x,y\in X$ with $x\neq y$ there is a function $f\in A$ such that $f(x)\neq f(y)$. We say a uniform algebra $A$ is {\em trivial} if $A=C(X)$. We say $A$ is {\em natural} if every character on $A$ corresponds to a {\em point evaluation} $\pe x(f):=f(x)$ ($f\in A$) for some $x\in X$. In general, we shall often identify the point $x$ with the point evaluation $\pe x$. For general background on uniform algebras we refer the reader to \cite{browder1969,gamelin1984,stout1971} and for general background on Banach algebras we refer the reader to \cite{dales2000}.

Let $X$ be a compact plane set. We denote the set of all rational functions with poles off $X$ by $R_0(X)$ and we denote its closure in $C(X)$ by $R(X)$. We denote the set of all functions $f\in C(X)$ with $f|_{\inte X}$ analytic by $A(X)$. It is easy to see that $R(X)$ and $A(X)$ are uniform algebras on $X$,
\[R_0(X)\subseteq R(X)\subseteq A(X)\subseteq C(X),\]
and $A(X)=C(X)$ if and only if $\inte X$ is empty. It is standard (see, for example, \cite[Chapter II]{gamelin1984}) that $R(X)$, $A(X)$ and $C(X)$ are natural uniform algebras on $X$.

\begin{definition}
Let $A$ be a commutative$,$ unital Banach algebra. Then $A$ is {\em regular} if for each closed $E\subseteq\Phi_A$ and $\varphi\in\Phi_A\setminus E$ there exists $\hat f\in\hat A,$ the Gelfand transform of $A,$ such that $\hat f(\varphi)=1$ and $\hat f(E)\subseteq\{0\}$. Similarly$,$ $A$ is {\em normal} if for each pair of disjoint$,$ closed sets $E,F\subseteq\Phi_A$ there exists $\hat f\in\hat A$ such that $\hat f(E)\subseteq\{1\}$ and $\hat f(F)\subseteq\{0\}$.
\end{definition}

It is standard that regularity and normality are equivalent for commutative, unital Banach algebras (see, for example, \cite[Proposition~4.1.18]{dales2000}).

\begin{definition}
Let $A$ be a commutative Banach algebra and $E$ a commutative Banach $A$-bimodule. A {\em derivation} $D:A\to E$ is a linear map such that
\[
D(ab)=a\cdot D(b)+D(a)\cdot b\qquad(a,b\in A).
\]
A commutative Banach algebra is {\em weakly amenable} if there are no non-zero continuous derivations from $A$ into any commutative Banach $A$-bimodule.
\end{definition}

We refer the reader to \cite{dales2000} for the definition of Banach $A$-bimodules and further details. It is known (\cite{bade1987}) that a commutative Banach algebra $A$ is weakly amenable if and only if there are no non-zero, continuous derivations from $A$ into $A'$, the dual module of $A$.

\begin{definition}
Let $A$ be a commutative, unital Banach algebra and let $\varphi$ be a character on $A$. A {\em point derivation} at $\varphi$ is a linear functional $d$ on $A$ such that
\[
d(ab)=\varphi(a)d(b)+d(a)\varphi(b)\qquad(a,b\in A).
\]
A {\em point derivation} of order $n\in \mathbb N$ (respectively, $\infty$) at $\varphi$ is a sequence $d_0,d_1,\cdots$ of linear functionals with $d_0=\varphi,$ satisfying
\[
d_j(ab) = \sum\limits_{k=0}^jd_k(a)d_{j-k}(b)\qquad(a,b\in A),
\]
for $j=1,2,\dotsb,n$ (respectively, $j=1,2,\cdots$).
A point derivation is continuous if $d_j$ is continuous for $j\leq n$ (respectively, all $n$).
\end{definition}

Let $A$ be an algebra of functions on a compact space $X$. A point $x\in X$ is a {\em peak point} for $A$ if there exists $f\in A$ such that $f(x)=1$ and $\abs{f(y)}<1$ for all $y\in X$ with $y\neq x$. It is standard that if $x$ is a peak point for $A$ then there are no non-zero point derivations at $x$, see \cite[Corollary 1.6.7]{browder1969}.

\begin{definition}
A uniform algebra $A$ on a compact space $X$ is {\em essential} if, for each non-empty, proper, closed subset $F\subseteq X,$ there is a function $f\in C(X)\setminus A$ such that $f|_F=0$.
\end{definition}

It is standard (\cite[Theorem~2.8.1]{browder1969}) that $A$ is essential if and only if the union of all of the supports for annihilating measures for $A$ on $X$ is dense in $X$.

\begin{definition}
Let $A$ be a uniform algebra on a compact space $X$. Then $A$ is {\em antisymmetric} if every real valued function in $A$ is constant.
\end{definition}

We remark that every antisymmetric uniform algebra is essential, but the converse is false (\cite[Page~147]{browder1969}).

A number of the constructions in this paper involve finding a compact subset of a given compact plane set so that the subset has better topological properties. The following proposition, from \cite{feinheath2010}, lists some properties of $R(X)$ which are preserved when we consider a subset $Y$ of $X$.

\begin{proposition}\label{subsetprops}
Let $X$ and $Y$ be compact plane sets with $Y\subseteq X$. Then$:$
\begin{enumerate}
  \item if $R(X)$ is trivial then so is $R(Y);$
  \item if $R(X)$ does not have any non-zero bounded point derivations then neither does $R(Y);$
  \item if $R(X)$ is regular then so is $R(Y)$.
\end{enumerate}
\end{proposition}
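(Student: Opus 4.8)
The plan is to route everything through the restriction homomorphism $\rho\colon C(X)\to C(Y)$, $\rho(f)=f|_Y$. First I would observe that, since $Y\subseteq X$, every rational function with poles off $X$ has its poles off $Y$, so $\rho(R_0(X))\subseteq R_0(Y)$; because $\rho$ is norm-decreasing, $\abs{f|_Y}_Y\le\abs f_X$, and $R(Y)$ is closed, taking closures gives $\rho(R(X))\subseteq R(Y)$. I would then prove each clause by transporting the relevant structure along $\rho$, using throughout that $R(X)$ and $R(Y)$ are natural, so their character spaces are $X$ and $Y$ and all characters are point evaluations.

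For (1) and (3) I expect the arguments to be formal. For (1), the Tietze extension theorem makes $\rho$ surjective onto $C(Y)$, so if $R(X)=C(X)$ then $C(Y)=\rho(C(X))=\rho(R(X))\subseteq R(Y)\subseteq C(Y)$ and hence $R(Y)=C(Y)$. For (3), given a closed $F\subseteq Y$ and a point $y\in Y\setminus F$, compactness of $Y$ makes $F$ closed in $X$ with $y\in X\setminus F$; regularity of $R(X)$ then supplies $g\in R(X)$ with $g(y)=1$ and $g|_F=0$, and $\rho(g)\in R(Y)$ provides the separation needed to conclude that $R(Y)$ is regular.

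For (2) I would argue by contraposition. Assuming a non-zero bounded point derivation $d$ on $R(Y)$ at some $y\in Y$, I would set $D=d\circ\rho|_{R(X)}$; multiplicativity of $\rho$ and $\rho(f)(y)=f(y)$ make $D$ a point derivation on $R(X)$ at $\pe y$, and $\abs{D(f)}\le\|d\|\,\abs f_X$ makes it bounded. The one nonobvious point — and the crux — is that $D\neq0$, since a priori $d$ might annihilate the possibly proper subspace $\rho(R(X))\subseteq R(Y)$. To rule this out I would show that a bounded point derivation on $R(Y)$ is complex differentiation in disguise: writing $Z$ for the coordinate function, which is a polynomial and so lies in both $R_0(X)$ and $R_0(Y)$, the derivation identity yields $d(Z^n)=ny^{n-1}d(Z)$ and, for $a\notin Y$, $d\bigl(1/(Z-a)\bigr)=-(y-a)^{-2}d(Z)$, so by linearity and induction $d(f)=f'(y)\,d(Z)$ for every $f\in R_0(Y)$. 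Continuity then forces $d=0$ whenever $d(Z)=0$, so $d(Z)\neq0$; since $Z|_Y=\rho(Z)\in\rho(R(X))$ we obtain $D(Z)=d(Z)\neq0$, exhibiting a non-zero bounded point derivation on $R(X)$ and contradicting the hypothesis.

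I expect this last step to be the main obstacle. Parts (1) and (3), and the verification that $D$ is a bounded point derivation in (2), are routine transport along $\rho$; what is not automatic is the non-triviality of the pulled-back derivation, and securing it relies on identifying point derivations on $R(Y)$ with a scalar multiple of differentiation at $y$, together with the observation that the coordinate function lies in the image of $\rho$.
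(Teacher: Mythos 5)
Your argument is correct, and all three parts go through as you describe: the restriction homomorphism $\rho$ carries $R(X)$ into $R(Y)$, Tietze extension gives (1), naturality of $R(X)$ and $R(Y)$ gives (3), and the pull-back derivation $D=d\circ\rho|_{R(X)}$ settles (2) once $d(Z)\neq 0$ is established. Note that the paper itself gives no proof of this proposition (it is quoted from \cite{feinheath2010}), so the comparison is with the standard argument, which yours essentially is. The one place where you do more work than necessary is precisely the crux you identify in (2): the identity $d(f)=f'(y)\,d(Z)$ for $f\in R_0(Y)$, and its consequence that a non-zero bounded point derivation at $y$ exists if and only if there is $M>0$ with $\abs{f'(y)}\leq M\abs{f}_Y$ for all $f\in R_0(Y)$, is exactly Wermer's observation that this paper quotes at the start of Section 4.1. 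Granting that observation, (2) is immediate: if $R(Y)$ has a non-zero bounded point derivation at $y$, then $\abs{f'(y)}\leq M\abs{f}_Y\leq M\abs{f}_X$ for every $f\in R_0(X)$, since restrictions of members of $R_0(X)$ lie in $R_0(Y)$, so $R(X)$ has one too. Your inline re-derivation of this characterisation is fine; the only step left slightly implicit is the passage from monomials and simple poles to all of $R_0(Y)$, which needs either partial fractions (with an induction giving $d\bigl((Z-a)^{-k}\bigr)=-k(y-a)^{-k-1}d(Z)$) or the quotient-rule computation $d(p/q)=(p/q)'(y)\,d(Z)$ for polynomials $p,q$ with $q$ non-vanishing on $Y$; either is routine.
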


\section{Swiss cheeses}
\label{Swiss_cheese_sec}
Some of the properties that a uniform algebra can possess depend on the nature of its character space. Swiss cheese sets (as described in the introduction) are often constructed so that $R(X)$ has various combinations of desirable properties. We discuss various such constructions in Section \ref{swisscheeseexamples}.

In \cite{feinheath2010,mason2010} a {\em Swiss cheese} object was associated to a Swiss cheese set to allow the manipulation of the closed disk and each open disk. These objects are a pair consisting of a closed disk and a countable collection of open disks, where the associated Swiss cheese set is obtained by deleting each open disk in the collection from the closed disk. These objects can be associated to compact plane sets which previously may not have been considered Swiss cheese sets. In fact, without some additional conditions, every compact plane set is a Swiss cheese set. We usually insist the sum of the radii of the open disks is finite to limit the sets we can obtain from Swiss cheeses.

Another way of studying Swiss cheese sets was later developed (in \cite{FMY}) in which a sequence of pairs, consisting of a complex number and a non-negative real number, are associated to a Swiss cheese set. The elements in each pair of these sequences correspond to the centres and radii, respectively, of the disks used to construct a Swiss cheese set. In this way the space of {\em abstract Swiss cheeses} can be given a topology which can be used to prove various results about the associated Swiss cheese sets.

\begin{notation}For $a\in \C$ and $r> 0$ we denote the open disk centred at $a$ of radius $r$ by $\ob{a}{r}$ and the corresponding closed disk by $\cb{a}{r}$. We also set $\ob a0=\emptyset$ and $\cb a0=\{a\}$. We denote by $\No$ the set of non-negative integers, $\N$ the set of positive integers and $\Rp$ the set of non-negative real numbers.
\end{notation}

As in \cite{FMY}, we define the space $\scs:=(\C\times\Rp)^\No$ with the product topology.

\begin{definition}\label{absSCdef}
Let $A=((a_n,r_n))_{n=0}^\infty\in\scs$. We call $A$ an {\em abstract Swiss cheese }$,$ and we define the following.
\begin{enumerate}
 \item The \emph{significant index set} of $A$ is  $S_A:=\{n\in \mathbb N:r_n>0\}$. We say that $A$ is {\em finite} if $S_A$ is a finite set, otherwise it is \emph{infinite}.
 \item The {\em associated Swiss cheese set} $X_A$ is given by
 \begin{equation}\label{absScs}X_A:=\cb{a_0}{r_0}\setminus\left( \bigcup\limits_{n=1}^\infty\ob{a_n}{r_n}\right).
 \end{equation}
 \item We say that $A$ is {\em semiclassical} if $\sum_{n=1}^\infty r_n<\infty,$ $r_0>0$ and for all $k\in S_A$  the following holds$:$
     \begin{enumerate}
        \item $\ob{a_k}{r_k}\subseteq\ob{a_0}{r_0};$
        \item whenever $\ell\in S_A$ has $\ell\neq k,$ we have $\ob{a_k}{r_k}\cap\ob{a_\ell}{r_\ell}=\emptyset$.
     \end{enumerate}
 \item We say that $A$ is {\em classical} if $\sum_{n=1}^\infty r_n<\infty,$ $r_0>0$ and for all $k\in S_A$ the following holds$:$
    \begin{enumerate}
        \item $\cb{a_k}{r_k}\subseteq\ob{a_0}{r_0};$
        \item whenever $\ell\in S_A$ with $\ell\neq k,$ we have $\cb{a_k}{r_k}\cap\cb{a_\ell}{r_\ell}=\emptyset$.
    \end{enumerate}
 \item We say $A$ is {\em annular} if $a_0=a_1$ and $r_0>r_1>0$.
 \item The (classical) {\em error set} $E(A)$ of $A$ is defined to be the set
 \[
 \bigcup\limits_{\substack{m,n\in S_A\\m\neq n}}\bigg(\cb{a_m}{r_m}\cap \cb{a_n}{r_n}\bigg)\cup\bigcup\limits_{n\in S_A}((\C\setminus\ob{a_0}{r_0})\cap \cb{a_n}{r_n}).
 \]
\end{enumerate}
For $\alpha\geq 1$ we define the {\em discrepancy function of order $\alpha,$} $\delta_\alpha:\scs\to[-\infty,\infty),$ by
 \begin{equation}\label{gendiscr}\delta_\alpha(A):=r_0^\alpha -\sum\limits_{n=1}^\infty r_n^\alpha\qquad(A=((a_n,r_n))_{n\geq 0}\in\scs).
 \end{equation}
The {\em annular discrepancy function} $\delta_{\rm ann}:\scs\to[-\infty,\infty)$ is defined by
\[
\delta_{\rm ann}(A):=r_0-r_1-2\sum\limits_{n=2}^\infty r_n\qquad(A=((a_n,r_n))_{n\geq 0}\in\scs).
\]
\end{definition}

We usually denote an abstract Swiss cheese by $A=((a_n,r_n))$, where it is understood in context that the sequence is indexed by the non-negative integers. We shall often say {\em annular Swiss cheese} to mean an annular, abstract Swiss cheese. We say a Swiss cheese set $X$ is {\em semiclassical} ({\em classical}) if there is a semiclassical (classical) abstract Swiss cheese $A$ such that $X=X_A$. We note that the annular discrepancy function is defined for all abstract Swiss cheeses, but is only really considered when dealing with annular Swiss cheeses.

As in \cite{FMY}, we introduce the following functions on $\mathcal F$.

\begin{definition}
The {\em radius sum function} is the map $\rho:\scs\to[0,\infty]$ given by
\[
\rho(A):=\sum\limits_{n=1}^\infty r_n\qquad(A=((a_n,r_n))\in\scs).
\]
The {\em centre bound function} is the map $\mu:\scs\to[0,\infty]$ given by
\[
\mu(A):=\sup_{n\in\N}{\abs{a_n}}\qquad(A=((a_n,r_n))\in\scs).
\]
The {\em annular radius sum function} is the map $\ar:\scs\to[0,\infty]$ given by
\[
\ar(A):=\sum\limits_{n=2}^\infty r_n\qquad(A=((a_n,r_n))\in\scs).
\]
\end{definition}

We often impose conditions on the centres and radii. It is easy to see that, for an abstract Swiss cheese $A$, we have $\delta_1(A)>-\infty$ if and only if $\rho(A)<\infty$. Thus the condition $\delta_1(A)>-\infty$ in the definition of a classical Swiss cheese from \cite{feinheath2010} is equivalent to the condition $\rho(A)<\infty$ in Definition \ref{absSCdef}.

We denote the collection of all abstract Swiss cheeses $A=((a_n,r_n))$ with $\rho(A)<\infty$ and $(r_n)_{n=1}^\infty$ non-increasing by $\mathcal N$. In addition, for each $M>0$, we denote the set of all those abstract Swiss cheeses $A\in\mathcal N$ such that $\rho(A)\leq M$ by $\mathcal N_M$.

It is often useful to consider those abstract Swiss cheeses where there are no obvious ``redundant'' open disks. For example, open disks contained in, or equal to, any other open disk. To this end, we make the following definition.

\begin{definition}
Let $A=((a_n,r_n))$ be an abstract Swiss cheese. Then $A$ is {\em redundancy-free}, if for all $k\in S_A$  we have $\ob{a_k}{r_k}\cap\cb{a_0}{r_0}\neq\emptyset$, and for all $\ell\in S_A$ with $k\neq \ell$ we have $\ob{a_k}{r_k}\not\subseteq\ob{a_\ell}{r_\ell}$.
\end{definition}

Let $U\subseteq \mathbb{C}$ be an open set and let $A=((a_n,r_n))$ be an abstract Swiss cheese. We denote by $H_A(U)$ the set of all $n\in S_A$ for which $\cb{a_n}{r_n}\cap U\neq\emptyset$, and we denote by $\rho_U(A)$ the sum $\sum_{n\in H_A(U)}r_n$.

The following result is essentially \cite[Lemma~4.3]{FMY}.

\begin{lemma}\label{nonredundantcheese}
Let $A=((a_n,r_n))\in\mathcal F$ with $\rho(A)<\infty$. Then there exists a redundancy-free abstract Swiss cheese $B=((b_n,s_n))\in\mathcal N_{\rho(A)}$ with $X_B=X_A$ and $\cb{b_0}{s_0}=\cb{a_0}{r_0}$ such that$,$ for all open sets $U\subseteq\C,$ $\rho_U(B)\leq \rho_U(A)$.
\end{lemma}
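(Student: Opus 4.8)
The plan is to obtain $B$ from $A$ by keeping the outer disk $\cb{a_0}{r_0}$ unchanged and retaining a carefully chosen subcollection of the open disks of $A$, discarding exactly those disks that either miss $\cb{a_0}{r_0}$ or are swallowed by another retained disk. First I would exploit the hypothesis $\rho(A)<\infty$: since $\sum_{n\geq 1}r_n<\infty$, for each $\varepsilon>0$ only finitely many radii exceed $\varepsilon$, so the positive radii can be listed in non-increasing order (with multiplicity). Reindexing the disks of $A$ in this way produces an abstract Swiss cheese with the same associated set, the same radius sum and $(r_n)_{n\geq 1}$ non-increasing; so I may assume from the outset that $r_1\geq r_2\geq\cdots$.

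I would then select indices greedily. Set $b_0=a_0$, $s_0=r_0$, and process $k=1,2,3,\ldots$ in turn, maintaining a set $K$ of retained indices (initially empty). At step $k$ discard $k$ if $r_k=0$, or if $\ob{a_k}{r_k}\cap\cb{a_0}{r_0}=\emptyset$, or if $\ob{a_k}{r_k}\subseteq\ob{a_\ell}{r_\ell}$ for some $\ell\in K$ already retained; otherwise add $k$ to $K$. (Each such step involves only the finitely many indices below $k$, so the procedure is well defined.) Let $B$ list $\cb{a_0}{r_0}$ together with the disks $(a_k,r_k)$, $k\in K$, in increasing order of $k$, padding with radius-zero pairs if $K$ is finite. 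Since the retained disks form a subcollection of those of $A$ with unchanged radii, $\rho(B)\leq\rho(A)$ and $(s_n)_{n\geq 1}$ is non-increasing, so $B\in\mathcal N_{\rho(A)}$; moreover $\cb{b_0}{s_0}=\cb{a_0}{r_0}$ by construction. For redundancy-freeness, every retained disk meets $\cb{a_0}{r_0}$ by the discard rule, and if $k,k'\in K$ with $k<k'$ then $\ob{a_{k'}}{r_{k'}}\not\subseteq\ob{a_k}{r_k}$ (else $k'$ would have been discarded at its step), while $\ob{a_k}{r_k}\not\subseteq\ob{a_{k'}}{r_{k'}}$ because containment of open disks forces $r_k\leq r_{k'}$, hence by the ordering equality of the disks, again contradicting retention of $k'$. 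Finally, the key inequality $\rho_U(B)\leq\rho_U(A)$ is essentially free: for any open $U$ the set $H_B(U)$ corresponds to a subcollection of $H_A(U)$ with the same radii, so the defining sums satisfy $\rho_U(B)\leq\rho_U(A)$.

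The substantive point, and the step I expect to require the most care, is verifying $X_B=X_A$, i.e.\ that discarding disks does not reveal any new points of $\cb{a_0}{r_0}$. Since $\cb{b_0}{s_0}=\cb{a_0}{r_0}$, it suffices to show that $\bigcup_{k\in K}\ob{a_k}{r_k}$ and $\bigcup_{n\geq 1}\ob{a_n}{r_n}$ have the same intersection with $\cb{a_0}{r_0}$. The inclusion $\subseteq$ is immediate. For the reverse, I would argue that every significant disk of $A$ meeting $\cb{a_0}{r_0}$ is contained in some retained disk: such a disk is either retained, or was discarded because it lay inside some $\ob{a_\ell}{r_\ell}$ with $\ell$ retained at that moment --- and the crucial observation is that the greedy procedure only ever adds to $K$, so $\ell$ remains retained permanently. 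Disks that miss $\cb{a_0}{r_0}$ contribute nothing to the intersection. Hence the two unions agree on $\cb{a_0}{r_0}$ and $X_B=X_A$. The delicacy throughout lies in the interaction between the containment relation and the radius ordering, and in the treatment of coincident disks, which is exactly what the monotone reindexing is arranged to control.
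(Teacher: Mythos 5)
Your proposal is correct. Note that this paper contains no proof of Lemma~\ref{nonredundantcheese} at all---it is stated as being ``essentially [FMY, Lemma~4.3]'', a manuscript cited as in preparation---so there is no in-paper argument to compare against; your proof supplies the missing details in what is surely the intended way: reindex so that the radii are non-increasing, then greedily discard each disk that misses $\cb{a_0}{r_0}$ or is contained in an already-retained disk. The two delicate points are both handled correctly: first, the radius ordering forces any containment of an earlier retained disk in a later one to be an equality of disks, which your tie-breaking toward the smaller index excludes, giving redundancy-freeness; second, every discarded significant disk meeting $\cb{a_0}{r_0}$ lies inside a disk that is retained \emph{permanently} (since $K$ only grows), which yields $X_B=X_A$, while $\rho_U(B)\le\rho_U(A)$ follows because the retained disks inject into those of $A$ with radii and closed disks unchanged.
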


In the above lemma, the conditions $\cb{b_0}{s_0}=\cb{a_0}{r_0}$ and $\rho(B)\leq\rho(A)$ together imply that $\delta_1(B)\geq\delta_1(A)$. An application of Fatou's lemma shows that, for each $\alpha\geq1$, the function $\delta_\alpha$ is upper semicontinuous on $\mathcal N$. Furthermore, for each $M>0$ and $\alpha>1$, the dominated convergence theorem can be used to show that $\delta_\alpha$ is continuous on $\mathcal N_M$.

Semiclassical (and hence classical) Swiss cheese sets have desirable topological properties such as being rectifiably path connected and locally rectifiably path connected. In fact, in \cite{dalefein2010}, it was proved that for any two points $z,w$ in a classical Swiss cheese set $X$, there is a rectifiable path $\gamma$ in $X$ joining $z$ and $w$ with length at most $\pi\abs{z-w}$; thus, $X$ is {\em uniformly regular} (as defined in \cite{dalefein2010}). In fact, it is easy to see that the same proof works for semiclassical Swiss cheese sets and the constant $\pi$ can be improved to $\pi/2$. These properties imply, in particular, that no semiclassical Swiss cheese set has any isolated points. Note that, for an arbitrary abstract Swiss cheese $A\in\mathcal N$, $X_A$ may have isolated points. Also, as noted in \cite{feinheath2010}, as a consequence of a theorem of \cite{whyburn1958} all classical Swiss cheese sets with empty interior are homeomorphic to the Sierpi\'nski carpet.

It was proved in \cite{feinheath2010} that, for any semiclassical Swiss cheese set $X$, the algebra $R(X)$ is essential. We comment that there are slight differences in definitions of Swiss cheese/abstract Swiss cheese used in \cite{feinheath2010,mason2010} and here. However, all three papers use the same definition for classical/semiclassical Swiss cheese sets. We refer the reader to Section~\ref{Comparison} for a comparison of various notions of a Swiss cheese object.

Due to an argument of Allard (outlined in \cite[p.~163-164]{browder1969}), any classical Swiss cheese set has positive $2$-dimensional Lebesgue measure. With minor adjustments this argument shows that a semiclassical Swiss cheese must also have positive $2$-dimensional Lebesgue measure (area). An alternative argument uses a theorem of Wesler \cite{wesler1960}.

\section{Examples of Swiss cheese sets}
\label{swisscheeseexamples}
We now look at a selection of examples of Swiss cheese sets and their properties from the literature. The Swiss cheese sets discussed here may or may not be classical.
The Swiss cheese sets which we describe as classical were introduced by Alice Roth in \cite{roth1938} (see also \cite{AliceRoth2005}) as examples of compact plane sets $X$ where $R(X)\neq A(X)=C(X)$.
Later these examples were discovered independently by Mergelyan \cite{mergelyan1952}. Since then there have been numerous constructions of compact plane sets, fitting our definition of a Swiss cheese set, designed so that $R(X)$ has various properties.

Prior to the work in \cite{feinheath2010} the focus was on the Swiss cheese set, rather than the underlying (abstract) Swiss cheese. We shall often use the language of abstract Swiss cheeses to describe these examples. The following is a selection of notable Swiss cheese constructions from the literature.

\subsection{Bounded point derivations}
The following construction of a classical Swiss cheese set $X$ such that $R(X)$ admits no non-zero bounded point derivations is due to Wermer \cite{wermer1967}. His construction relies on the following observation. Let $X$ be a compact plane set and let $x\in X$, there exists a non-zero bounded point derivation on $R(X)$ at $x$ if and only if there exists a constant $M>0$ such that $\abs{f'(x)}\leq M\abs f_X$ for each $f\in R_0(X)$. We now briefly outline some key steps of the construction from \cite{wermer1967}.

Let $X_0$ be a finite, classical Swiss cheese set (where only finitely many open disks have been deleted) and let $M,\varepsilon>0$ with $\varepsilon<1/2$. For each $n\in\N$ and for each $j,k\in\Z$, let $D_{jk}^n$ denote the open disk centred at $(j+ki)/n$ of radius $r_n=\varepsilon/n^2$. For a fixed $n\in\N$, we let $S$ denote the set of all $(j,k)\in\Z^2$ such that $\overline{D_{jk}^n}$ lies in the interior of $X_0$. The key to the construction in \cite{wermer1967} is the following lemma.

\begin{lemma}\label{wermerlemma}
There exists an integer $N=N(\varepsilon,M,X_0)$ such that
\[
\sum\limits_{(j,k)\in S}\frac{\varepsilon/N^2}{\abs{z-(j+ki)/N}^2}>M
\]
and
\[
\sum\limits_{(j,k)\in S}\frac{\varepsilon/N^2}{\abs{z-(j+ki)/N}}\leq 50
\]
for all $z\in X':=X\setminus\bigcup_{(j,k)\in S}D_{jk}^N$.
\end{lemma}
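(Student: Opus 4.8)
The plan is to fix the small parameter $\varepsilon$ and the constant $M$, and then to analyse the two sums as functions of the single scale parameter $N$, showing that as $N\to\infty$ the first sum blows up (beating $M$) while the second stays bounded (below $50$), uniformly in $z\in X'$. The right viewpoint is to compare each sum to a two-dimensional integral over the interior of $X_0$. Recall that the centres $(j+ki)/N$ form a square lattice of spacing $1/N$, so each lattice cell has area $1/N^2$; the weight $\varepsilon/N^2$ attached to each retained disk is exactly $\varepsilon$ times this cell area. Thus both sums are Riemann-type sums: the first approximates $\varepsilon\int\abs{z-w}^{-2}\,\dm w$ and the second approximates $\varepsilon\int\abs{z-w}^{-1}\,\dm w$, where the integrals are taken over (a region close to) $\inte X_0$.

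First I would make the comparison-to-integral rigorous. For the second sum, the integrand $\abs{z-w}^{-1}$ is locally integrable in the plane, so the integral over the bounded region $\inte X_0$ is finite; a standard estimate bounds $\varepsilon\int_{\inte X_0}\abs{z-w}^{-1}\,\dm w$ by a constant depending only on $\varepsilon$ and the diameter of $X_0$, and one arranges (by the hypothesis $\varepsilon<1/2$ together with the explicit geometry) that this constant, plus the Riemann-sum error, is at most $50$. The delicate point here is the diagonal: when $z$ lies near a retained centre, the nearest few terms are large, but since $z\in X'$ the disks $D_{jk}^N$ around retained centres have been deleted, so $\abs{z-(j+ki)/N}\ge r_N=\varepsilon/N^2$ is not quite the right bound; rather $z$ avoids the retained disks, giving a quantitative lower bound on the distance to the nearest retained centre, which controls the worst terms. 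One sums the near terms directly and estimates the far terms by the integral.

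For the first sum I would argue in the opposite direction: here $\abs{z-w}^{-2}$ is \emph{not} locally integrable, so the integral over any neighbourhood of $z$ diverges logarithmically, and this is precisely the mechanism that forces the sum above $M$. Concretely, for $z$ in the interior of $X_0$ (away from the boundary), a fixed disk of radius $\delta>0$ about $z$ lies inside $\inte X_0$ and hence all its lattice centres belong to $S$ once $N$ is large; the sum of $\varepsilon/N^2 \cdot \abs{z-(j+ki)/N}^{-2}$ over these centres approximates $\varepsilon\int_{\delta/N\le\abs{z-w}\le\delta}\abs{z-w}^{-2}\,\dm w \sim 2\pi\varepsilon\log N$, which tends to infinity. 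Choosing $N$ large enough makes this exceed $M$. One must handle $z$ near the boundary of $X_0$, where fewer surrounding centres lie in $S$; here a uniform lower bound over all $z\in X'$ is needed, so I would use a local interior-cone condition (valid since $X_0$ is a finite classical Swiss cheese, hence has reasonable boundary) to guarantee that a definite fraction of a disk about any such $z$ still meets $\inte X_0$.

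The main obstacle is obtaining both bounds \emph{uniformly} in $z$ over the whole set $X'$ simultaneously, with the \emph{same} $N$. The tension is that the lower bound for the first sum wants $z$ surrounded by many retained centres (interior behaviour), while the upper bound for the second sum must survive $z$ sitting close to a retained centre (near-diagonal behaviour); reconciling these forces one to track the distance from $z$ to the deleted disks $D_{jk}^N$ carefully and to quantify the Riemann-sum errors uniformly. I expect the bulk of the work to be this uniform error analysis: showing that the approximation of sum by integral holds with an error $o(\log N)$ for the first sum and $O(1)$ for the second, uniformly in $z\in X'$, so that a single threshold $N=N(\varepsilon,M,X_0)$ delivers both inequalities at once.
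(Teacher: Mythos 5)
The paper itself contains no proof of this lemma: it is stated as the key step of Wermer's construction and quoted from \cite{wermer1967}, so the only meaningful comparison is with Wermer's original argument, and your sketch is essentially that argument. The three ingredients you identify are the right ones: viewing both sums as Riemann sums with weight $\varepsilon/N^2$ per lattice cell of area $1/N^2$; getting the upper bound for the $\abs{z-w}^{-1}$ sum from local integrability of the kernel (the far-field contribution is $O(\varepsilon)$ times the diameter of $X_0$, far below $50$); and getting the lower bound for the $\abs{z-w}^{-2}$ sum from the logarithmic divergence $\sim 2\pi\varepsilon\log N$, which exceeds any fixed $M$ once $N$ is large since $\varepsilon$ is held fixed. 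One small correction: the near-diagonal issue for the second sum is less delicate than you suggest, because the bound $\abs{z-(j+ki)/N}\geq\varepsilon/N^2$ coming from $z\notin D^N_{jk}$ is exactly what is needed --- it caps the single nearest term at $1$, and since distinct lattice points are $1/N$ apart, at most one centre lies within $1/(2N)$ of $z$, so all remaining terms are controlled by the shell/integral estimate. Your treatment of uniformity near $\partial X_0$ is also the correct way to close the argument, and is where the classicality of $X_0$ is genuinely used: the boundary of a finite classical Swiss cheese consists of finitely many pairwise disjoint circles of bounded curvature, so for small $\delta$ each $z\in X'$ is within $\delta$ of at most one of them, a fixed angular fraction of every shell of radius between $C/N$ and $\delta$ about $z$ then consists of retained centres, and discarding the first few shells costs only $O(\varepsilon)$, so the $\log(\delta N)$ growth survives uniformly in $z$. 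With the routine Riemann-sum error estimates you defer (monotonicity of the kernels makes the cell-by-cell comparison standard), the plan yields a complete proof.
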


Wermer's construction of a classical Swiss cheese set $X$ such that $R(X)$ admits no non-zero bounded point derivations is as follows. Choose a suitable sequence $(\varepsilon_n)$ and let $X_0$ be the closed unit disk. Construct each finite, classical Swiss cheese set $X_{n}$ ($n\geq 1$) inductively by applying Lemma \ref{wermerlemma} to $X=X_{n-1},$ $\varepsilon=\varepsilon_n$ and $M=n$ to and setting $X_n=X'$. Let $X$ be the intersection the sets $X_n$, which is a classical Swiss cheese set. The choice of sequence $(\varepsilon_n)$ allows us to make the sum of the radii of all open disks arbitrarily small. For each $z\in X$, the construction yields a sequence $(f_n)$ of rational functions on $X$ such that $\abs{f_n'(z)}>n$ (from Lemma \ref{wermerlemma}) and $\abs{f}_X\leq 50$ for each $n\in\N$. It then follows that $R(X)$ admits no non-zero bounded point derivations.

This construction was adapted by O'Farrell \cite{o1973isolated}, to construct a Swiss cheese set $X$ for which $R(X)$ admits a non-zero bounded point derivation at exactly one single point of $X$. From the proofs in \cite{wermer1967} and \cite{o1973isolated}, we can distill the following proposition.

\begin{proposition}\label{Anulus_class_no_bpd}
Let $\varepsilon >0$, and $s_0>s_1\geq 0$. Then there exists a classical abstract Swiss cheese $A = ((a_n,r_n))$ with $r_j = s_j$ for $j=1,2$ and $\ar(A)<\varepsilon$ such that $R(X_A)$ has no non-zero bounded point derivations.
\end{proposition}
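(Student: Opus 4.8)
The plan is to take the obvious finite classical configuration determined by $s_0$ and $s_1$ as a base set and then run Wermer's iterative construction on top of it, invoking Lemma~\ref{wermerlemma} at each stage while keeping careful track of the total radius of the disks we delete. Concretely, set $a_0=a_1=0$, $r_0=s_0$, $r_1=s_1$, and let $X_0:=\cb{0}{s_0}\setminus\ob{0}{s_1}$; since $s_0>s_1$ this is a finite classical Swiss cheese set (a closed disk when $s_1=0$, an annulus when $s_1>0$), so Lemma~\ref{wermerlemma} applies to it just as to a disk. All disks produced below will receive indices $n\geq 2$, so the requirements $r_0=s_0$ and $r_1=s_1$ hold automatically, and $\ar(A)=\sum_{n\geq 2}r_n$ measures exactly the total radius we introduce.

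Next I would build $X_1,X_2,\dotsc$ as in Wermer's construction: fix a sequence $(\varepsilon_n)$ with each $\varepsilon_n\in(0,1/2)$, and obtain $X_n$ from $X_{n-1}$ by applying Lemma~\ref{wermerlemma} with $\varepsilon=\varepsilon_n$ and $M=n$, deleting the grid disks $D_{jk}^{N_n}$ (where $N_n=N(\varepsilon_n,n,X_{n-1})$) whose closures lie in $\inte X_{n-1}$. Two facts need checking. First, classicality is preserved: the newly deleted disks have closures inside $\inte X_{n-1}$, hence disjoint from $\cb{0}{s_1}$, from the circle of radius $s_0$, and from the closures of all previously deleted disks; and within a single stage the centres $(j+ki)/N_n$ sit on a grid of spacing $1/N_n$ with radius $\varepsilon_n/N_n^2<1/(2N_n)$, so their closures are pairwise disjoint. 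Second, the total radius added at stage $n$ is essentially independent of $N_n$: the number of grid points $(j+ki)/N_n$ ($j,k\in\Z$) lying in $\ob{0}{s_0}$ is at most $\pi s_0^2N_n^2+O(s_0N_n)$, each contributing radius $\varepsilon_n/N_n^2$, so the stage-$n$ contribution is at most $(\pi s_0^2+1)\varepsilon_n$ once $N_n$ is large. Choosing $(\varepsilon_n)$ with $\sum_n(\pi s_0^2+1)\varepsilon_n<\varepsilon$ then forces $\ar(A)<\varepsilon$. Setting $X:=\bigcap_n X_n$ and enumerating the countably many deleted disks as $(a_n,r_n)_{n\geq2}$ yields a classical abstract Swiss cheese $A$ with $X_A=X$, $r_0=s_0$, $r_1=s_1$ and $\ar(A)<\varepsilon$.

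It remains to see that $R(X)$ has no non-zero bounded point derivations, for which I would use the criterion recalled before Lemma~\ref{wermerlemma}: it suffices to show that for every $z\in X$ and every $M>0$ there is $f\in R_0(X)$ with $\abs{f'(z)}>M\abs{f}_X$. Fix $z\in X$ and $n\in\N$; since $X\subseteq X_n$, the two inequalities of Lemma~\ref{wermerlemma} hold at $z$ for the stage-$n$ grid. Writing $c_{jk}=(j+ki)/N_n$ for the deleted centres and choosing unimodular weights $\lambda_{jk}:=(z-c_{jk})^2/\abs{z-c_{jk}}^2$, put
\[
f(w):=\sum_{(j,k)\in S}\frac{\lambda_{jk}\,\varepsilon_n/N_n^2}{w-c_{jk}}.
\]
This is a rational function whose poles $c_{jk}$ lie off $X_n\supseteq X$, so $f\in R_0(X)$; the second inequality of Lemma~\ref{wermerlemma} gives $\abs{f}_X\leq\abs{f}_{X_n}\leq 50$, while the phases have been chosen so that every term of $f'(z)=-\sum_{(j,k)\in S}\lambda_{jk}(\varepsilon_n/N_n^2)(z-c_{jk})^{-2}$ equals $-(\varepsilon_n/N_n^2)\abs{z-c_{jk}}^{-2}$, whence $\abs{f'(z)}=\sum_{(j,k)\in S}(\varepsilon_n/N_n^2)\abs{z-c_{jk}}^{-2}>n$ by the first inequality. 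Thus $\abs{f'(z)}/\abs{f}_X>n/50$, and letting $n\to\infty$ shows there is no bounded point derivation at $z$; as $z\in X$ was arbitrary, $R(X)$ admits no non-zero bounded point derivations.

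The main obstacle is the radius bookkeeping in the second paragraph: one must verify that the number of disks deleted at each stage is controlled purely by the area of $\ob{0}{s_0}$, so that the factor $N_n^2$ cancels against the radius $\varepsilon_n/N_n^2$. This cancellation is what decouples the summability of $(\varepsilon_n)$ from the sizes $N_n$ forced on us by Lemma~\ref{wermerlemma} and lets us drive $\ar(A)$ below the prescribed $\varepsilon$. Everything else is a direct transcription of Wermer's argument with the closed disk replaced by the annulus $\cb{0}{s_0}\setminus\ob{0}{s_1}$ as the base set.
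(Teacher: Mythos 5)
Your proposal is correct and takes essentially the same route as the paper: Proposition~\ref{Anulus_class_no_bpd} is obtained there precisely by running Wermer's iterative construction (Lemma~\ref{wermerlemma}) with the base set $X_0$ taken to be a closed disk or closed annulus, the radius sum being controlled through the choice of the sequence $(\varepsilon_n)$. Your write-up simply supplies details the survey leaves implicit, namely the lattice-point count that makes the radius bookkeeping work and the explicit phase-weighted rational functions witnessing the failure of bounded point derivations.
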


In fact, Wermer's construction can be applied to any finite, classical Swiss cheese set $X_0$. Proposition \ref{Anulus_class_no_bpd} is obtained by choosing $X_0$ to be either a closed disk or a closed annulus.

The following result, which is due to Browder, is given in \cite{wermer1967}.

\begin{proposition}
Let $A=((a_n,r_n))$ be a classical abstract Swiss cheese and suppose that
$\sum_{n=1}^\infty r_n\log(1/r_n)<\infty.$
Then there exist non-zero bounded point derivations on $X_A$ almost everywhere with respect to Lebesgue measure.
\end{proposition}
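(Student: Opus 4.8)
The plan is to apply the characterisation recalled above in connection with Wermer's construction: $R(X_A)$ admits a non-zero bounded point derivation at a point $x$ if and only if there is a constant $M>0$ with $\abs{f'(x)}\le M\abs{f}_{X_A}$ for every $f\in R_0(X_A)$. It therefore suffices to produce, for almost every $x\in X_A$, a finite $M(x)$ for which this estimate holds; the resulting functional $f\mapsto f'(x)$ is then a non-zero bounded point derivation at $\pe x$. Write $\delta_0(x):=\dist(x,\partial\ob{a_0}{r_0})$ and, for $n\in S_A$, $\delta_n(x):=\dist(x,\cb{a_n}{r_n})=\abs{x-a_n}-r_n$, and let $Z:=\partial\ob{a_0}{r_0}\cup\bigcup_{n\in S_A}\partial\ob{a_n}{r_n}$, a Lebesgue-null set. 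Throughout we work with $x\in X_A\setminus Z$.

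First I would set up a Cauchy representation of $f'(x)$. Fix $f\in R_0(X_A)$; it has only finitely many poles, so I may choose $N$ so large that every pole of $f$ lying in $\ob{a_0}{r_0}$ belongs to $\bigcup_{n=1}^N\ob{a_n}{r_n}$. Because $A$ is classical, the closed disks $\cb{a_n}{r_n}$ are pairwise disjoint and contained in $\ob{a_0}{r_0}$; hence $f$ is analytic on a neighbourhood of the finite Swiss cheese set $X_N:=\cb{a_0}{r_0}\setminus\bigcup_{n=1}^N\ob{a_n}{r_n}$, whose boundary is the circle $\partial\ob{a_0}{r_0}$ together with the finitely many disjoint circles $\partial\ob{a_n}{r_n}$ $(1\le n\le N)$. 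For $x\in\inte X_N$ the Cauchy integral formula for the derivative gives
\[
f'(x)=\frac{1}{2\pi i}\oint_{\partial\ob{a_0}{r_0}}\frac{f(\zeta)}{(\zeta-x)^2}\,d\zeta-\sum_{n=1}^N\frac{1}{2\pi i}\oint_{\partial\ob{a_n}{r_n}}\frac{f(\zeta)}{(\zeta-x)^2}\,d\zeta.
\]
Since $X_A\subseteq X_N$ and $X_A\setminus\inte X_N$ is contained in the finitely many bounding circles, hence in $Z$, this identity holds for every $x\in X_A\setminus Z$.

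Next I would estimate the contour integrals by elementary modulus bounds. For $\zeta$ on $\partial\ob{a_n}{r_n}$ one has $\abs{\zeta-x}\ge\delta_n(x)$, so, bounding $\abs f$ by $\abs f_{X_A}$ and the arclength by $2\pi r_n$,
\[
\left\lvert\oint_{\partial\ob{a_n}{r_n}}\frac{f(\zeta)}{(\zeta-x)^2}\,d\zeta\right\rvert\le\frac{2\pi r_n}{\delta_n(x)^2}\,\abs f_{X_A},
\]
and similarly the outer term is at most $2\pi r_0\,\delta_0(x)^{-2}\abs f_{X_A}$. Replacing the finite sum over $n\le N$ by the full series over $n\in S_A$ removes the dependence on $N$ and yields $\abs{f'(x)}\le M(x)\abs f_{X_A}$ with
\[
M(x)=\frac{r_0}{\delta_0(x)^2}+\sum_{n\in S_A}\frac{r_n}{\delta_n(x)^2},
\]
a quantity depending only on $x$ and on the centres and radii, not on $f$.

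It remains to prove $M(x)<\infty$ for almost every $x$, and this is where the hypothesis $\sum_n r_n\log(1/r_n)<\infty$ is used. Split $S_A$ according to whether $\abs{x-a_n}\ge 2r_n$ (so $\delta_n(x)\ge\abs{x-a_n}/2$) or $\abs{x-a_n}<2r_n$. For the far indices, $r_n\delta_n(x)^{-2}\le 4r_n\abs{x-a_n}^{-2}$, and, using $\abs{x-a_n}<2r_0$ for $x\in\cb{a_0}{r_0}$ and Tonelli's theorem with polar coordinates about $a_n$,
\[
\int_{\cb{a_0}{r_0}}\frac{4r_n}{\abs{x-a_n}^2}\mathbf 1_{\{\abs{x-a_n}\ge 2r_n\}}\,dA(x)=8\pi r_n\log\frac{r_0}{r_n}\le 8\pi\Big(r_n\log\tfrac1{r_n}+r_n\log r_0\Big);
\]
summing over $n$ gives a finite total by the hypothesis together with $\sum_n r_n<\infty$, so the far part of the series is finite almost everywhere. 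The main obstacle is the near part, whose terms $r_n\delta_n(x)^{-2}$ are not area-integrable near $\partial\ob{a_n}{r_n}$, so one cannot simply integrate. Instead I would localise using the Borel--Cantelli lemma: the annulus $E_n:=\{x:r_n\le\abs{x-a_n}<2r_n\}$ has area $3\pi r_n^2$ and $\sum_n r_n^2<\infty$ (since $r_n\to0$ and $\sum_n r_n<\infty$), so almost every $x$ lies in only finitely many $E_n$. For such an $x$ the near part is a finite sum, each term finite because $\delta_n(x)>0$ for $x\notin Z$. Finally $\delta_0(x)>0$ off the outer circle. Hence $M(x)<\infty$ almost everywhere, completing the proof.
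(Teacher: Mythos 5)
Your proposal is correct, but there is nothing in the paper to check it against: the survey states this proposition without proof, attributing it to Browder and citing Wermer's paper, adding only that it rests on ``a more elementary sufficient condition'' than Hallstrom's analytic-capacity criterion. What you have written is essentially a reconstruction of that classical argument, and it holds together. You invoke the derivative-bound characterisation that the paper records in its discussion of Wermer's construction ($R(X)$ admits a non-zero bounded point derivation at $x$ if and only if $\abs{f'(x)}\le M\abs{f}_X$ for all $f\in R_0(X)$); you derive the sufficient condition $M(x)=r_0\,\delta_0(x)^{-2}+\sum_{n\in S_A}r_n\,\delta_n(x)^{-2}<\infty$ (in your notation) from the Cauchy integral formula on the finite classical Swiss cheese sets $X_N$, which is legitimate because the cycle $\partial\ob{a_0}{r_0}-\sum_{n\le N}\partial\ob{a_n}{r_n}$ has winding number $1$ about each point of $X_A\setminus Z$ and winding number $0$ about every pole of $f$; and you prove $M(x)<\infty$ almost everywhere by splitting into a far part, handled by Tonelli together with the hypothesis $\sum_n r_n\log(1/r_n)<\infty$ (which is exactly what makes the integrals of $r_n\abs{x-a_n}^{-2}$ over $\{2r_n\le\abs{x-a_n}\le 2r_0\}$ summable in $n$), and a near part, handled by Borel--Cantelli since $\sum_n r_n^2<\infty$. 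Two points deserve to be made explicit, though neither is a genuine gap. First, bounding $\abs{f(\zeta)}$ by $\abs{f}_{X_A}$ on the contours requires that the circles $\partial\ob{a_n}{r_n}$ and $\partial\ob{a_0}{r_0}$ actually lie in $X_A$; this is a second, essential use of classicality (pairwise disjoint closed disks contained in $\ob{a_0}{r_0}$ ensure no boundary circle is swallowed by another deleted disk), and it is precisely the step that fails for a general abstract Swiss cheese, where rational functions need not be bounded on the contours by their sup over $X_A$. Second, your displayed far-part computation should read as an inequality rather than an equality (the region of integration is merely contained in the annulus $2r_n\le\abs{x-a_n}\le 2r_0$), and the limiting functional is non-zero because it sends $z\mapsto z$ to $1$; both are trivial repairs.
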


In particular, there exists classical Swiss cheese sets $X$ such that $R(X)$ admits non-zero bounded point derivations at almost every point, and classical Swiss cheese sets $Y$ such that $R(Y)$ admits no non-zero bounded point derivations.

Note that a theorem of Hallstrom \cite{hallstrom1969} gives necessary and sufficient conditions for the existence of non-zero bounded point derivations on $R(X)$ at a point $x\in X$, where $X$ is a compact plane set, in terms of analytic capacity. Browder's result is based on a more elementary sufficient condition.

\subsection{Regular uniform algebras}
McKissick \cite{mckissick1963nontrivial} constructed a Swiss cheese set $X$ such that $R(X)$ is regular and non-trivial. In fact, this is the first known example of a non-trivial, regular uniform algebra. His original construction, which relies on a construction of Beurling outlined in \cite[p.~349-355]{stout1971}, was greatly simplified by K\"orner \cite{korner1986cheaper}, which is the version presented below adjusted by translation and scaling.

\begin{lemma}\label{kornerlemma}
Let $\varepsilon>0,$ let $\Delta=\ob ar$ be an open disk and let $E=\C\setminus\Delta$. There exists a sequence of complex numbers $(a_n),$ a sequence of positive real numbers $(r_n)$ and a sequence of rational functions $(f_n)$ such that$,$ if we let $U=\bigcup_{k=1}^\infty \ob{a_k}{r_k},$ then the following hold$:$
\begin{enumerate}
  \item $\sum_{n=1}^\infty r_n<\varepsilon;$
  \item the poles of $f_n$ lie in $U$ for each $n;$
  \item the sequence $(f_n)$ converges uniformly on $\C\setminus U$ to a function $f$ such that $f\left(E\setminus U\right) = \{0\}$ and $0\notin f(\Delta\setminus U);$
  \item $U\subseteq\{z\in\C:r-\varepsilon<\abs{z-a}<r\};$
  \item for each $m,n\in\N$ with $m\neq n$ we have $\ob{a_m}{r_m}\cap\ob{a_n}{r_n}=\emptyset$.
\end{enumerate}
\end{lemma}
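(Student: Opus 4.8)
The plan is to construct the sequences $(a_n)$, $(r_n)$, and $(f_n)$ by an explicit geometric scheme placing small disks inside a thin annulus just inside the boundary circle $\abs{z-a}=r$, and then building rational functions whose poles sit in those disks so that their product or sum vanishes on the outer component $E\setminus U$ while staying away from zero on the inner piece $\Delta\setminus U$. The natural mechanism, following Beurling--K\"orner, is to approximate the function $z\mapsto 0$ on the exterior by rational functions with carefully controlled poles, exploiting that a function of the form $\prod_k (1 - c_k/(z-a_k))$ can be made small outside $\Delta$ yet bounded away from $0$ inside once the $a_k$ cluster near the boundary circle and the radii $r_k$ shrink fast. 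Without loss of generality, by the translation and scaling indicated before the statement, I would reduce to the case $a=0$, $r=1$, so $\Delta = \ob 01$ and $E = \C\setminus\Delta$.

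First I would fix a small parameter and choose finitely many arcs covering the boundary circle, and on each arc place a finite cluster of disks $\ob{a_k}{r_k}$ inside the annulus $\{r-\varepsilon<\abs{z-a}<r\}$, immediately securing conditions~$4.$ and~$5.$ by taking the disks pairwise disjoint and controlling the total radius sum to get~$1.$; this is the easy bookkeeping part. Next, on each cluster I would build a partial rational function (a finite product of elementary factors with poles in the local disks) designed to be close to $1$ on the inside of $\Delta$ near that arc and close to $0$ just outside; the product $f_n$ over all clusters, with poles lying in $U$ by construction (giving~$2.$), should converge uniformly on $\C\setminus U$ once the parameters are tuned so the tail contributions are summably small. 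The convergence and the two-sided estimate---$f_n\to f$ with $f(E\setminus U)=\{0\}$ and $0\notin f(\Delta\setminus U)$---constitute condition~$3.$

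The hard part will be establishing condition~$3.$ simultaneously with the quantitative radius bound in~$1.$: one must show that the poles can be packed densely enough along the boundary to force the limit to vanish on the entire outer set, yet with radii small enough that $\sum r_n<\varepsilon$ and that the limit function stays uniformly bounded away from $0$ on $\Delta\setminus U$. This is a delicate balance, since making the exterior decay sharp pushes the poles closer together (increasing the risk of overlap, threatening~$5.$) and tends to degrade the lower bound on the inside. I would control this by a logarithmic-potential estimate: writing each factor in the form $\exp$ of a sum of $\log$-terms, I would bound the real part of the exponent above on $E\setminus U$ and below on $\Delta\setminus U$ using the geometry of the annular placement, and then invoke uniform convergence on the compact complement $\C\setminus U$. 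I would rely on the K\"orner presentation referenced in the text for the precise choice of factors, and verify that the uniform limit $f$ inherits the vanishing and non-vanishing properties from the uniform estimates on the approximants.
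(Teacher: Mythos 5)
First, a point of comparison: the paper itself gives no proof of this lemma --- it is stated as K\"orner's theorem from \cite{korner1986cheaper}, ``adjusted by translation and scaling'' --- so a blind proof would have to reproduce K\"orner's construction rather than match an argument in the text. Your proposal does not do this. The entire analytic core of the lemma --- the explicit choice of the rational functions $f_n$, together with the verification that they converge uniformly on $\C\setminus U$, that the limit vanishes identically on $E\setminus U$ but at no point of $\Delta\setminus U$, and that the poles can be enclosed in pairwise disjoint disks of radius sum less than $\varepsilon$ --- is exactly the part you defer to ``the K\"orner presentation referenced in the text for the precise choice of factors.'' Conditions 1, 4 and 5 are indeed bookkeeping; conditions 2 and 3 \emph{are} the lemma, and nothing in the proposal constructs or estimates the $f_n$. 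Note also that the disks cannot be placed first and the functions fitted to them afterwards: in K\"orner's (and McKissick's) argument the poles are dictated by the analytic construction (e.g.\ scaled roots of unity arising from factors of the form $\bigl(1-(\rho/z)^n\bigr)^{-1}$), and the deleted disks are then drawn around those poles with controlled radii, so the geometry and the analysis cannot be decoupled in the way your first paragraph suggests.

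Second, one of your stated goals is impossible, so the ``logarithmic-potential'' plan as described would fail even if carried out. You require the limit $f$ to be \emph{uniformly} bounded away from $0$ on $\Delta\setminus U$, equivalently a uniform lower bound for $\mathrm{Re}\log f=\log\abs{f}$ there. This cannot hold. We may assume $\varepsilon<2r/3$, since the lemma for smaller $\varepsilon$ is stronger. By condition 4, $U$ lies in the open annulus $\{r-\varepsilon<\abs{z-a}<r\}$, so the circle $\abs{z-a}=r$ is contained in $E\setminus U$ and $f$ vanishes on it. On the other hand, for every $r'\in(r-\varepsilon,r)$ the disks of $U$ meet the circle $\abs{z-a}=r'$ in arcs of total length at most $\pi\sum_n r_n<\pi\varepsilon<2\pi r'$, so that circle contains points of $\Delta\setminus U$; hence $\Delta\setminus U$ contains points arbitrarily close to the circle $\abs{z-a}=r$. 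Since $f$ is a uniform limit of functions continuous on $\C\setminus U$, it is continuous there, and therefore $\inf_{\Delta\setminus U}\abs{f}=0$. The lemma asserts only the pointwise statement $0\notin f(\Delta\setminus U)$, and that is what the actual construction delivers: at each fixed $z\in\Delta\setminus U$ the infinite product converges to a nonzero value because its tail is close to $1$ \emph{at that point}, while on $E\setminus U$ the product diverges to $0$. No uniform lower bound inside is available, and any strategy aiming for one is structurally doomed.
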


Let $\overline{\Delta}$ be a closed disk in $\mathbb C$ with positive radius, and let $(D_m)_{m=1}^\infty$ be an enumeration of open disks in $\C$ with rational centre (a point with rational real and imaginary parts) and rational (non-zero) radii. Then we may apply Lemma~\ref{kornerlemma} to each $D_m$ with $\varepsilon_m =1/2^m$ to obtain $(\Delta_m^{(k)})_{k=1}^\infty$. In this way we obtain a Swiss cheese set $X=\overline{\Delta} \setminus \bigcup_{m,k} \Delta_m^{(k)}$, where $R(X)\neq C(X)$ and $R(X)$ is regular. To see that $R(X)$ is regular, let $K\subseteq X$ be closed and let $z\in X\setminus K$, then there exists $m\in\N$ with $z\in D_m\cap X$ and $D_m\cap K = \emptyset$. From the construction there exists $f\in R(X)$ with $f(z)\neq 0$ and $f$ is identically zero on $K$.

McKissick \cite{mckissick1963nontrivial} used the collection of all open disks with rational centres and radii from the plane to give a Swiss cheese set $X$ such that $R(X)$ is regular. In fact, choosing every open disk with rational centre and rational radii is more than is required. In \cite{o1979regular}, O'Farrell uses those open disks whose centre $a$ has rational real and imaginary parts and rational radius $r$ which satisfy either $0<\abs a\leq 1$, $r<\abs{a}/2$ and $r<1-\abs{a}$, or $a=0$ and $r=2^{-n}$ for some $n\in\N$. Applying Lemma~\ref{kornerlemma} to this collection of open disks to obtain the Swiss cheese set $X$ (starting from the closed unit disk) ensures that $R(X)$ is regular and $0\in X$. With careful control over the sum of the radii of the deleted open disks, O'Farrell showed that $R(X)$ admits a bounded point derivation of infinite order at $0$. Using the method from \cite{mckissick1963nontrivial}, along with Proposition \ref{subsetprops}, (see also \cite{o1979regular}) the following can be proved.

\begin{proposition}\label{mckissickconstruction}
Let $b_0=b_1\in\C$ and $s_0>s_1\geq 0$ and let $\varepsilon>0$. There exists an abstract Swiss cheese $A=((a_n,r_n))$ with $a_j=b_j,$ $r_j=s_j$ for $j=0,1$ and  $\ar(A)<\varepsilon$ such that $R(X_A)$ is regular.
\end{proposition}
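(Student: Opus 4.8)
The plan is to run McKissick's construction, in the form of K\"orner's Lemma~\ref{kornerlemma}, on the whole closed disk $\cb{b_0}{s_0}$ while keeping a strict bound on the total radius deleted, and then to realise $X_A$ as a compact subset of the regular set so produced, transferring regularity by means of Proposition~\ref{subsetprops}. In this way the annular hole $\ob{b_0}{s_1}$ never enters the regularity argument at all: it is simply deleted at the very end.

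First I would fix a budget $\eta:=\min\{\varepsilon,\sqrt{s_0^2-s_1^2}\}/2>0$ and enumerate the open disks $(D_m)_{m=1}^\infty$ with rational centre and rational positive radius whose closures meet $\cb{b_0}{s_0}$, exactly as in the construction recorded after Lemma~\ref{kornerlemma}. Applying Lemma~\ref{kornerlemma} to each $D_m$ with parameter $\varepsilon_m:=\eta/2^{m}$ yields, for each $m$, open disks $\Delta_m^{(k)}=\ob{a_m^{(k)}}{r_m^{(k)}}$ $(k\in\N)$ with $\sum_{k}r_m^{(k)}<\varepsilon_m$, together with the associated K\"orner functions. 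Put
\[
X:=\cb{b_0}{s_0}\setminus\bigcup_{m,k}\Delta_m^{(k)}.
\]
The regularity argument given after Lemma~\ref{kornerlemma} is insensitive to the sizes of the $\varepsilon_m$: given a closed set $K\subseteq X$ and $z\in X\setminus K$, the rational disks form a basis for the topology of $\C$, so one may choose $D_m$ with $z\in D_m$ and $\overline{D_m}\cap K=\emptyset$; the corresponding K\"orner function then lies in $R(X)$, is non-zero at $z$ and vanishes on $K$. Hence $R(X)$ is regular. Moreover the total deleted radius satisfies $\sum_{m,k}r_m^{(k)}<\sum_{m=1}^\infty\varepsilon_m=\eta\leq\varepsilon/2<\varepsilon$, while the total deleted area is at most $\pi\sum_{m,k}(r_m^{(k)})^2\leq\pi\big(\sum_{m,k}r_m^{(k)}\big)^2<\pi\eta^2\leq\tfrac\pi4(s_0^2-s_1^2)$, which is strictly smaller than the area $\pi(s_0^2-s_1^2)$ of the annulus $\cb{b_0}{s_0}\setminus\ob{b_0}{s_1}$.

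Now I would assemble the abstract Swiss cheese. Set $a_0=a_1=b_0$, $r_0=s_0$, $r_1=s_1$, and let $((a_n,r_n))_{n\geq 2}$ be any enumeration of the pairs $(a_m^{(k)},r_m^{(k)})$; call the resulting element $A\in\scs$. Then $a_j=b_j$ and $r_j=s_j$ for $j=0,1$, and $\ar(A)=\sum_{n=2}^\infty r_n=\sum_{m,k}r_m^{(k)}<\varepsilon$. Since $a_0=a_1=b_0$ and $r_1=s_1$, the associated set is
\[
X_A=\cb{b_0}{s_0}\setminus\Big(\ob{b_0}{s_1}\cup\bigcup_{m,k}\Delta_m^{(k)}\Big)=X\setminus\ob{b_0}{s_1},
\]
a compact subset of $X$; by the area estimate above it has positive measure and so is non-empty. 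As $R(X)$ is regular, Proposition~\ref{subsetprops}(3) gives that $R(X_A)$ is regular, which is exactly what is required.

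The genuine content is entirely supplied by Lemma~\ref{kornerlemma} and the McKissick regularity argument; the remaining work is bookkeeping, and the only two points needing care are the radius budget and non-emptiness. The budget is handled by the geometric choice $\varepsilon_m=\eta/2^m$, which forces $\ar(A)<\eta<\varepsilon$, and I expect no obstacle there. Non-emptiness is the one place where a large $\varepsilon$ is a nuisance, since a generous budget could in principle delete the whole annulus; capping the budget at $\eta\leq\tfrac12\sqrt{s_0^2-s_1^2}$ removes this via the crude area bound above, at no cost since $\ar(A)<\varepsilon$ is only an upper constraint. Finally, the decision to build $X$ on the full disk rather than directly on the annulus is precisely what lets us quote the McKissick argument verbatim and lean on Proposition~\ref{subsetprops}(3), avoiding any need to analyse how the deleted disks interact with the hole $\ob{b_0}{s_1}$.
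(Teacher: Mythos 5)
Your proposal is correct and follows essentially the same route as the paper, which obtains this proposition precisely by running the McKissick--K\"orner construction (Lemma~\ref{kornerlemma} applied to an enumeration of rational disks with summable parameters $\varepsilon_m$) on the closed disk $\cb{b_0}{s_0}$ to get a Swiss cheese set $X$ with $R(X)$ regular and small radius sum, and then deleting $\ob{b_1}{s_1}$ and invoking Proposition~\ref{subsetprops}(3) to transfer regularity to the subset $X_A$. Your extra bookkeeping (the budget $\eta$ and the area argument guaranteeing $X_A\neq\emptyset$) just makes the paper's sketch fully rigorous.
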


Note that Swiss cheese sets obtained by this method are not classical in general. For instance, in McKissick's construction, any deleted open disk contains a sequence of (redundant) deleted open disks.

K\"orner's lemma has been used or adapted to construct a number of examples of Swiss cheese sets $X$ for which $R(X)$ has various combinations of properties. For example, in \cite{feinstein1991}, the first author modified the lemma to construct a Swiss cheese set $X$, obtained using a construction similar to that of O'Farrell, such that $R(X)$ has a prime ideal whose closure is not prime.

In \cite{wang1975}, Wang uses McKissick's lemma to give an example of a Swiss cheese set $X$ such that $R(X)$ is strongly regular at a non-peak point.  In \cite{feinstein2001}, the first author gave an example of a Swiss cheese set $X$ such that $R(X)$ has no non-trivial Jensen measures yet is not regular. In \cite{feinstein2004}, the same author gave an example of a Swiss cheese set $X$, using Wermer's construction (Proposition \ref{Anulus_class_no_bpd}), such that $R(X)$ has no non-zero bounded point derivations but is not weakly amenable.
This construction was improved by Heath \cite{Heath2005} who gave an example of a compact plane set $Y$ such that $R(Y)$ was regular and admitted no non-zero bounded point derivations but is not weakly amenable. However, in this example disks were deleted from a square shaped compact set in the plane rather than a closed disk.

\subsection{Other examples}
There are other examples based on the construction of suitable Swiss cheese sets. In \cite{steen1966}, Steen gave a construction which can be used to give a classical Swiss cheese set $X$ such that $R(X)$ is not antisymmetric.

There are several examples of classical Swiss cheese sets (with our current definition) given in \cite{gamelin1984}. These include the {\em roadrunner set} (p.~52), the {\em string of beads} (p.~146), the {\em stitched disk} (Example 9.3), the {\em Champagne bubbles} (p.~227), along with Examples 9.1 and 9.2. Example 9.2 of \cite{gamelin1984} was also used in \cite{dalefein2010}. These examples, unlike most of the above examples, have non-empty interior.

Swiss cheese sets, and similar constructions, have also been used to construct interesting Banach algebras of functions, for example in \cite{brennan1973} and \cite{Brennan2013}, and in harmonic approximation, for example in \cite{browder1969}. The latter example is a ``square Swiss cheese'', obtained by deleting from a closed, square shaped, compact plane set $K$ a sequence of open, square shaped, plane sets.

\section{Classicalisation theorems}
We have seen that semiclassical and classical Swiss cheeses have desirable topological properties. In \cite{feinheath2010}, Feinstein and Heath gave a sufficient condition to find a classical Swiss cheese whose associated plane set is a subset of the original. In fact they proved the following, stated here in the language of abstract Swiss cheeses.

\begin{proposition}[Classicalisation theorem]\label{classicalisationtheorem}
Let $A$ be an abstract Swiss cheese with $\delta_1(A)>0$. Then there exists a classical abstract Swiss cheese $B$ such that $\delta_1(B)\geq\delta_1(A)$ and $X_B\subseteq X_A$.
\end{proposition}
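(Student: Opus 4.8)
The plan is to use the reformulation that an abstract Swiss cheese $B$ is classical precisely when its error set $E(B)$ is empty, and to reach such a $B$ by repeatedly ``merging'' the features that contribute to the error while never decreasing the discrepancy $\delta_1$ nor enlarging the associated set. First I would record the easy consequences of the hypothesis: since $\delta_1(A)>0$ we have $\rho(A)<\infty$ and $r_0=\rho(A)+\delta_1(A)>0$, so $A$ genuinely has a main disk. Applying Lemma~\ref{nonredundantcheese} I may assume from the outset that $A$ is redundancy-free and lies in $\mathcal N_M$ with $M=\rho(A)$, with $X_A$ and $\cb{a_0}{r_0}$ unchanged and $\rho_U(A)$ controlled for every open $U$; redundancy-freeness forces each deleted closed disk to meet $\cb{a_0}{r_0}$, so all centres satisfy $\abs{a_n}\leq\abs{a_0}+r_0+M$ and are uniformly bounded.

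The engine of the proof is a pair of elementary moves. Given $C$ and two indices $k\neq\ell$ in $S_C$ with $\cb{a_k}{r_k}\cap\cb{a_\ell}{r_\ell}\neq\emptyset$, the \emph{interior merge} deletes these two disks and inserts the smallest closed disk containing their union; its radius $t$ satisfies $t\leq r_k+r_\ell$, so the move shrinks $X_C$, does not increase $\rho(C)$ (hence does not decrease $\delta_1$), and does not increase any $\rho_U$. When instead a closed disk $\cb{a_k}{r_k}$ meets $\C\setminus\ob{a_0}{r_0}$, the \emph{boundary absorption} move discards disk $k$ and replaces the main disk by the largest disk contained in $\cb{a_0}{r_0}$ and disjoint from $\ob{a_k}{r_k}$; a short computation shows the radius of the main disk then drops by at most $r_k$, so the net change in $\delta_1$ is nonnegative, while $X_B\subseteq X_A$ persists because the new main disk avoids $\ob{a_k}{r_k}$ entirely. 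After reordering radii to be non-increasing, both moves keep us inside $\mathcal N_M$, and each strictly reduces the ``defect'' of $B$ while respecting every constraint we wish to maintain.

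To turn these moves into an actual classical cheese I would run them inside a compact arena and extract a limit. Let $\mathcal G$ be the set of $B\in\mathcal N_M$ with $X_B\subseteq X_A$, $\delta_1(B)\geq\delta_1(A)$, $\cb{b_0}{s_0}\subseteq\cb{a_0}{r_0}$, and $\rho_U(B)\leq\rho_U(A)$ for every open $U\subseteq\C$. Because centres and radii lie in a fixed compact box, $\mathcal G$ embeds in a compact product space, and every defining condition is preserved under limits in the product topology: $\delta_1(B)\geq\delta_1(A)$ by upper semicontinuity of $\delta_1$ on $\mathcal N$, the inequalities $\rho_U(B)\leq\rho_U(A)$ by Fatou's lemma, and $X_B\subseteq X_A$ because deletion of open disks is lower semicontinuous. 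I would then apply the two moves repeatedly to $A$, staying in $\mathcal G$ and strictly decreasing the defect at each step, extract a limit $B^\ast\in\mathcal G$ by compactness, and argue that $E(B^\ast)=\emptyset$; the continuity of $\delta_\alpha$ on $\mathcal N_M$ for a fixed $\alpha>1$ is the natural tool for certifying that no erroneous feature survives in the limit. Such a $B^\ast$ is classical and satisfies $X_{B^\ast}\subseteq X_A$ and $\delta_1(B^\ast)\geq\delta_1(A)$, as required.

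The main obstacle is precisely this last limiting step. Individually each move is harmless, but there are in general infinitely many disks, so the moves cannot terminate in finitely many steps and must be organised into a transfinite or sequential scheme whose limit is genuinely free of error. Two features make this delicate: a boundary-absorption move repositions the main disk and can therefore turn previously interior disks into new poke-out errors, so one must show the process does not cascade indefinitely without converging; and although merging never hurts $\delta_1$, it can decrease $\delta_\alpha$ for $\alpha>1$, so $\delta_\alpha$ cannot simply be maximised to force disjointness and must instead be used more carefully as a control. Guaranteeing simultaneously that the limit has $E(B^\ast)=\emptyset$, that the ``radius mass'' neither escapes to infinity nor piles up so as to destroy the bound $\delta_1(B^\ast)\geq\delta_1(A)$, and that $X_{B^\ast}\subseteq X_A$ survives, is the crux; it is here that the uniform centre bound, the $\rho_U$-domination from Lemma~\ref{nonredundantcheese}, and the semicontinuity of the discrepancy functions must all be combined.
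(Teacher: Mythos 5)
Your preparation (reducing to a redundancy-free $A\in\mathcal N_M$ via Lemma~\ref{nonredundantcheese}) and your two moves are sound: they are essentially Lemmas~\ref{combinediscs} and~\ref{outsidedisc} adapted to closed disks. But the engine of your proof --- ``apply the moves repeatedly, extract a limit $B^\ast$ by compactness, and argue that $E(B^\ast)=\emptyset$'' --- is exactly the step you leave unproved, and it cannot be completed in the form you describe. An iteration of length $\omega$ can only exclude from the limit those error features whose removal gains a uniformly positive amount of $\delta_1$: this is precisely how this paper proves the weaker semiclassicalisation theorem (Theorem~\ref{Semiclassicalisation}), by greedily choosing the pair of maximal overlap (or the disk of maximal extrusion) and deriving a contradiction from unbounded growth of $\delta_1$. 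The classical error set, however, also contains tangencies: two closed disks touching at a single point, or a closed disk internally tangent to the circle bounding $\ob{a_0}{r_0}$. Merging a tangent pair produces a disk of radius exactly $r_k+r_\ell$, so $\delta_1$ gains nothing, the increments along your iteration may tend to $0$, and tangencies can survive in (or be created by) the limit; this is exactly why the paper's inductive argument stops at ``semiclassical.'' The full classicalisation theorem is proved in the literature by genuinely different devices: Zorn's lemma on a partial order in \cite{feinheath2010}, transfinite induction continued past $\omega$ with a termination argument in \cite{mason2010}, or --- the proof this survey describes --- a two-stage optimisation over a compact subset $\mathcal S$ of $\scs$, first maximising the upper semicontinuous function $\delta_1$ and then minimising the continuous function $\delta_2$ over the set of maximisers \cite{FMY}. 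Your remark that continuity of $\delta_\alpha$ for $\alpha>1$ is ``the natural tool'' gestures at this last mechanism but inverts it: in \cite{FMY} there is no iteration and no limit to certify; the existence of an optimiser replaces the iteration, and the moves serve only as variations showing a non-classical optimiser cannot be optimal (merging tangent disks preserves $\delta_1$ and strictly decreases $\delta_2$). Since your final paragraph explicitly concedes that this limiting step is unresolved, the proposal is an outline whose crux is missing.

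Two secondary flaws. First, $X_{B^\ast}\subseteq X_A$ is not automatic from ``lower semicontinuity of deletion'': when a disk is absorbed it is replaced by the degenerate pair $(0,0)$, so its open disk shrinks to a point in the product topology, and one must track where each deleted point's covering disk migrates through successive absorptions; this requires the index bookkeeping of \eqref{masontrick} (always allocate the merged disk to the smaller index, so that the chain of absorptions stabilises), which the proof of Theorem~\ref{Semiclassicalisation} carries out and your proposal omits. Second, your claim that an interior merge ``does not increase any $\rho_U$'' is false: the minimal enclosing disk of two intersecting disks is not contained in their union, so a small open set $U$ lying in the gap between the enclosing disk and the union has $\rho_U=0$ before the merge and $\rho_U>0$ after. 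This does not affect Proposition~\ref{classicalisationtheorem} itself (the $\rho_U$ constraints matter only for the controlled version, Proposition~\ref{localclassicalisation}), but it means your arena $\mathcal G$ is not invariant under your own moves, so even the set-up of the iteration needs repair.
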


This theorem was later proved using a transfinite induction by Mason \cite{mason2010}. In \cite{FMY}, the current authors proved the classicalisation theorem by considering a compact set $\mathcal S$ in the topological space $\scs$. The classical abstract Swiss cheese is obtained by first maximising $\delta_1$ on $\mathcal S$ and then minimising $\delta_2$ on the resulting compact subset of $\mathcal S$. These functions are upper-semicontinuous and continuous, respectively on $\mathcal S$. The compact set $\mathcal S$ depends on the initial abstract Swiss cheese, and consists of those abstract Swiss cheeses which are ``good candidates'' for the final, classical abstract Swiss cheese; see \cite{FMY} for formal definition of $\mathcal S$. In the same paper a similar result (below) for annular Swiss cheeses was proved by this topological method. This result can also be proved using transfinite induction.

\begin{proposition}[Annular classicalisation theorem]\label{annularclassicalisation}
Let $A=((a_n,r_n))$ be an annular Swiss cheese with $\delta_{\rm ann}(A)>0$. Then there exists a classical$,$ annular Swiss cheese $B=((b_n,s_n))$ with $b_0=a_0$ such that $\delta_{\rm ann}(B)\geq\delta_{\rm ann}(A)$ and $X_B\subseteq X_A$.
\end{proposition}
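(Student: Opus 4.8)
The plan is to adapt the topological method from \cite{FMY} that proves the ordinary classicalisation theorem (Proposition~\ref{classicalisationtheorem}), replacing the discrepancy function $\delta_1$ with the annular discrepancy function $\delta_{\rm ann}$ throughout, and being careful to preserve the annular structure $a_0=a_1$, $r_0>r_1>0$ at every stage. First I would reduce to a convenient normal form: by Lemma~\ref{nonredundantcheese} I may assume the given annular Swiss cheese $A=((a_n,r_n))$ is redundancy-free with $(r_n)_{n\geq 2}$ non-increasing and $\ar(A)<\infty$ (note $\delta_{\rm ann}(A)>0$ already forces $\rho(A)<\infty$). I would set up the normalisation so that the inner and outer disks of the annulus, indexed by $0$ and $1$, are left untouched by the reduction; the function $\rho_U$-domination in Lemma~\ref{nonredundantcheese} guarantees that deleting redundant disks does not enlarge $X_A$, so $X_B\subseteq X_A$ will be maintained.

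Next I would construct a compact candidate set $\mathcal S\subseteq\scs$, by analogy with the set $\mathcal S$ used in \cite{FMY}, consisting of those abstract Swiss cheeses $B=((b_n,s_n))$ that are ``reachable'' from $A$ while respecting the annular constraints: fix $b_0=a_0=a_1=b_1$ and the outer radius $s_0=r_0$, require the centres $b_n$ to lie in a fixed compact region and the radii to satisfy $\ar(B)\leq\ar(A)$ with $(s_n)$ non-increasing, and impose the containment condition that forces $X_B\subseteq X_A$. The product topology on $\scs$ together with the uniform radius-sum bound makes $\mathcal S$ compact, exactly as in the non-annular case. On this compact set, the key analytic facts carry over: $\delta_{\rm ann}$ is a finite linear combination of the radius sums $r_0$, $r_1$ and $\ar$, so the same Fatou-lemma argument used for $\delta_\alpha$ on $\mathcal N$ shows $\delta_{\rm ann}$ is upper semicontinuous on $\mathcal S$, while a suitable auxiliary functional (the annular analogue of $\delta_2$, e.g. $r_1^2-\sum_{n\geq 2}s_n^2$ or a weighted second-order sum) is continuous there by dominated convergence.

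The extremal procedure is then the same two-step maximise-then-minimise scheme: first maximise $\delta_{\rm ann}$ over $\mathcal S$ to obtain a compact subset $\mathcal S'$ of maximisers, then minimise the continuous auxiliary functional over $\mathcal S'$ to select a distinguished element $B$. One shows $B$ must be classical: if $B$ had two closed disks meeting, or a closed disk meeting the complement of $\ob{b_0}{s_0}$ (i.e.\ a non-empty error set $E(B)$), one could perform a local merging/shrinking move that strictly increases $\delta_{\rm ann}$ or, at equal $\delta_{\rm ann}$, strictly decreases the auxiliary functional, contradicting extremality while keeping $X_B$ inside $X_A$. The main obstacle will be verifying that these merging moves behave correctly with respect to the \emph{asymmetric} weighting in $\delta_{\rm ann}$, where the inner disk $r_1$ carries weight $1$ but all other disks carry weight $2$: I must check that whenever two deleted disks overlap, or a deleted disk protrudes past the inner or outer boundary of the annulus, the replacement disk can be chosen so that the weighted sum $r_1+2\ar$ does not increase faster than $r_0$ stays fixed, so that $\delta_{\rm ann}$ genuinely does not decrease. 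Handling the interaction of a peripheral disk with the inner disk indexed by $1$ (which itself must remain the inner boundary of the annulus) is the delicate point, and is precisely where the annular structure $b_0=b_1$ must be preserved by the move rather than destroyed.
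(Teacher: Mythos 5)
Your overall strategy --- a compact candidate set in $\scs$, maximisation of the upper semicontinuous $\delta_{\rm ann}$, then minimisation of a continuous second-order functional --- is exactly the approach the paper points to: it attributes the proof of Proposition~\ref{annularclassicalisation} to the topological method of \cite{FMY}, noting transfinite induction as an alternative. However, your setup of the candidate set contains a step that fails: you fix the outer radius $s_0=r_0$. The proposition deliberately fixes only the outer \emph{centre} $b_0=a_0$, and this is essential, because annular classicalisation requires pulling in the outer disk concentrically (the annular counterpart of Lemma~\ref{outsidedisc}) to deal with deleted disks that protrude across the boundary of $\ob{a_0}{r_0}$. With $s_0$ frozen, your $\mathcal S$ may contain no classical Swiss cheese at all, so no extremal argument inside it can prove the theorem. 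Concretely: let $a_0=a_1=0$, $r_0=1$, $r_1=1/10$, and let the only other deleted disk be $\ob{9/10}{1/5}$, so that $\delta_{\rm ann}(A)=1-1/10-2/5=1/2>0$. The point $z=1$ lies in $\ob{9/10}{1/5}$, hence $z\notin X_A$. If $B$ were classical and annular with $b_0=0$, $s_0=1$, $\delta_{\rm ann}(B)\geq 1/2$ and $X_B\subseteq X_A$, then $z=1\in\cb{b_0}{s_0}\setminus X_B$ would lie in some deleted open disk of $B$: not the inner disk, since annularity and $\delta_{\rm ann}(B)\geq 1/2$ force $s_1\leq 1/2$; and not any $\ob{b_k}{s_k}$ with $k\geq 2$, since classicality demands $\cb{b_k}{s_k}\subseteq\ob{0}{1}$, which excludes the point $1$. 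So no such $B$ exists; the theorem holds for this $A$ only via $s_0<1$ (replace the outer disk by $\cb{0}{7/10}$ and discard the protruding disk, raising $\delta_{\rm ann}$ to $3/5$). In general the concentric pulling-in move costs less than $2s_k$ in $s_0$ while gaining $2s_k$ from the discarded disk --- which is precisely why $\delta_{\rm ann}$ weights the outer radius by $1$ and the small disks by $2$ --- and your $\mathcal S$ must leave room for it.

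A second, smaller gap is the sign of your auxiliary functional. You propose minimising $r_1^2-\sum_{n\geq 2}s_n^2$, but consider the tangency the second stage must remove: a small closed disk $\cb{b_k}{s_k}$ touching the closed inner disk. The repairing move enlarges the inner disk concentrically to radius $s_1+2s_k$ and discards disk $k$; this leaves $\delta_{\rm ann}$ unchanged but changes your functional by $4s_1s_k+5s_k^2>0$, an \emph{increase}, so minimality yields no contradiction and the tangency can survive. A functional of the form $s_0^2-s_1^2-\sum_{n\geq 2}s_n^2$ (minimised) does decrease strictly under all three repairing moves --- merging two tangent small disks, absorbing a small disk tangent to the inner disk, and concentric pulling-in at the outer boundary, the last using $2s_k\leq 2\ar(B)<s_0-s_1$. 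This asymmetric-weighting check is exactly what your final paragraph identifies as the crux; identifying it is right, but the specific functional you wrote down does not pass it.
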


For a non-empty compact set $K$ and $M>0,$ we define
\[
U(K,M):=\{z\in\C:\dist(z,K)<M\}.
\]
Let $I$ be a non-empty subset of $\mathbb{N}$, and let $(K_n)_{n\in I}$ be a countable collection of non-empty, compact sets and $(M_n)_{n\in I}$ a countable collection of positive real numbers; we write $U_n$ for $U(K_n,M_n)$ in what follows. We say the countable collection of pairs $((K_n,M_n))_{n\in I}$ is {\em admissible} (with respect to $A$) if $\rho_{U_n}(A)<M_n/2$ for all $n\in I,$ ${U_m}\cap {U_n}=\emptyset$ for all $m\in I$ with $m\neq n$ and $\overline{U_n}\subseteq\ob{a_0}{r_0}$ for all $n\in I$.  By constructing a suitable compact subset of $\scs$ the following was proved. Note that this result can also be proved using transfinite induction.

\begin{proposition}[Controlled classicalisation theorem]\label{localclassicalisation}
Let $A=((a_n,r_n))$ be a redundancy-free abstract Swiss cheese with $\rho(A)<\infty$ and let $I$ be a non-empty subset of $\mathbb N$. Suppose that $((K_n,M_n))_{n\in I}$ is an admissible collection of pairs with $E(A)\subseteq\bigcup_{n\in I} K_n$. Then there exists a classical abstract Swiss cheese $B=((b_n,s_n))$ with $\delta_1(B)\geq\delta_1(A),$ $X_B\subseteq X_A,$ and the following hold$:$
\begin{enumerate}
    \item for all $k\in S_A\setminus\bigcup_{n\in I}H_A(U_n)$ there exists $\ell \in S_B$ with $\ob{b_\ell}{s_\ell} = \ob{a_k}{r_k};$
    \item $\rho_{U_n}(B)\leq \rho_{U_n}(A)$ for all $n\in I$.
\end{enumerate}
\end{proposition}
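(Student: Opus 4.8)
The plan is to adapt the topological optimisation argument of \cite{FMY} behind Proposition~\ref{classicalisationtheorem}, but to run it against a candidate set that builds in the locality data $((K_n,M_n))_{n\in I}$. First I would extract two consequences of the hypothesis $E(A)\subseteq\bigcup_{n\in I}K_n\subseteq\bigcup_{n\in I}U_n$. Every pairwise intersection of closed disks of $A$, and every part of a closed disk of $A$ lying outside $\ob{a_0}{r_0}$, is contained in $E(A)$ and hence in some $U_n$. Therefore any index $k\in S_A\setminus\bigcup_{n\in I}H_A(U_n)$ (call it a \emph{clean} index) has $\cb{a_k}{r_k}$ disjoint from every other closed disk of $A$ and contained in $\ob{a_0}{r_0}$: the clean disks are already in classical position, and all the difficulty is confined to the regions $U_n$, where $\rho_{U_n}(A)<M_n/2$ and $\overline{U_n}\subseteq\ob{a_0}{r_0}$.

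Next I would fix the compact candidate set. Let $\mathcal S$ consist of the abstract Swiss cheeses $B=((b_m,s_m))\in\mathcal N_{\rho(A)}$ with $(b_0,s_0)=(a_0,r_0)$, with $X_B\subseteq X_A$, and satisfying the strong locality bound $\rho_U(B)\le\rho_U(A)$ for \emph{every} open $U\subseteq\C$. This is exactly the class of candidates produced by Lemma~\ref{nonredundantcheese}, so, after arranging its radii in non-increasing order, $A$ lies in $\mathcal S$ and $\mathcal S\neq\emptyset$. Imposing the bound for all $U$ serves two purposes: taking $U=U_n$ yields conclusion~(2) at once, while taking small $U$ away from the disks of $A$ prevents any radius of $B$ from leaking into the clean region. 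Showing that $\mathcal S$ is compact in $\scs$ is the first substantial technical step: the uniform summability of the radii (bounded by $\rho(A)$) with $(b_0,s_0)$ fixed gives sequential relative compactness, and one must check that $X_B\subseteq X_A$ and $\rho_U(B)\le\rho_U(A)$ pass to coordinatewise limits, using the lower semicontinuity of $\rho_U$ and a Fatou-type argument as in Lemma~\ref{nonredundantcheese}.

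I would then optimise over $\mathcal S$ exactly as in \cite{FMY}: maximise the upper semicontinuous function $\delta_1$ to obtain a non-empty compact set of maximisers, and on that set minimise the continuous function $\delta_2$ (continuity holding on $\mathcal N_{\rho(A)}$); let $B$ be a resulting minimiser. Since $A\in\mathcal S$, automatically $\delta_1(B)\ge\delta_1(A)$. The heart of the matter is to show $B$ is classical: as in \cite{FMY}, if two closed disks of $B$ met, or a closed disk protruded from $\ob{a_0}{r_0}$, one could replace the offending disks by a strictly more efficient classical configuration covering the same deleted region, strictly increasing $\delta_1$ (or, in the tangential case, strictly decreasing $\delta_2$ at fixed $\delta_1$) while remaining in $\mathcal S$, a contradiction. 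The crucial new input, which replaces the global hypothesis $\delta_1(A)>0$ of Proposition~\ref{classicalisationtheorem}, is that each region carries only a small amount of radius, $\rho_{U_n}(A)<M_n/2$, while $\overline{U_n}\subseteq\ob{a_0}{r_0}$ and the $U_n$ are pairwise disjoint. These facts are what let the local error-removal operations inside a given $U_n$ produce closed disks that stay inside $\ob{a_0}{r_0}$, clear of the clean disks and of the other regions, and of no greater total radius; in particular a classical competitor always exists in $\mathcal S$ even when $\delta_1(A)\le 0$. Verifying this confinement-and-budget estimate is the principal obstacle, and is precisely where the factor $1/2$ and the separation $U_m\cap U_n=\emptyset$ are used.

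Finally I would read off the two numbered conclusions. Conclusion~(2) is immediate from the locality bound with $U=U_n$. For conclusion~(1), fix a clean index $k$ and take a decreasing sequence of open neighbourhoods $V_i$ of $\cb{a_k}{r_k}$, each disjoint from $\bigcup_{n\in I}\overline{U_n}$, with $\bigcap_i V_i=\cb{a_k}{r_k}$. Since the radii of $A$ are summable and $\cb{a_k}{r_k}$ is disjoint from every other closed disk of $A$, dominated convergence gives $\rho_{V_i}(A)\to r_k$. The locality bound yields $\rho_{V_i}(B)\le\rho_{V_i}(A)$, while $X_B\subseteq X_A$ forces the disks of $B$ meeting $V_i$ to cover $\ob{a_k}{r_k}$; as covering a disk of radius $r_k$ requires the covering radii to sum to at least $r_k$ (project onto a diameter), passing to the limit forces equality, and the same projection argument shows the cover reduces to the single disk $\ob{a_k}{r_k}$. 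Hence some $\ell\in S_B$ satisfies $\ob{b_\ell}{s_\ell}=\ob{a_k}{r_k}$, which is conclusion~(1).
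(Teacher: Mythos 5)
Your overall strategy---a compact candidate set in $\scs$ encoding the locality data, then maximising $\delta_1$ and minimising $\delta_2$---is exactly the route the paper attributes to \cite{FMY}, and several of your ingredients are sound: the observation that indices outside $\bigcup_{n\in I}H_A(U_n)$ are already in classical position, the Fatou/semicontinuity arguments for $\rho_U$, and the projection argument for conclusion~(1). The fatal problem is your choice of $\mathcal S$. Requiring $\rho_U(B)\leq\rho_U(A)$ for \emph{every} open $U\subseteq\C$ forces every disk of every $B\in\mathcal S$ to have its closure inside the closure of $\bigcup_{k\in S_A}\cb{a_k}{r_k}$: a point $z$ of a $B$-disk outside that closed set has a small neighbourhood $U$ with $\rho_U(A)=0<\rho_U(B)$. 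But the move of Lemma~\ref{combinediscs} produces a disk that bulges beyond the union of the two disks it replaces (perpendicular to the line of centres), so the replacement step generally leaves $\mathcal S$, and the contradiction at the heart of your argument (``a non-classical optimiser can be improved within $\mathcal S$'') evaporates.

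Worse, $\mathcal S$ may contain no classical Swiss cheese at all, so no optimisation over it can prove the proposition. Take $A$ with $\cb{a_0}{r_0}=\cb{0}{10}$ and exactly two significant disks $\ob{-1/2}{1}$ and $\ob{1/2}{1}$. This $A$ is redundancy-free, $\rho(A)=2$, and $E(A)$ is the closed lens $\cb{-1/2}{1}\cap\cb{1/2}{1}$; taking $I=\{1\}$, $K_1=E(A)$, $M_1=5$ gives an admissible collection with $E(A)\subseteq K_1$, so all hypotheses of Proposition~\ref{localclassicalisation} hold. In your $\mathcal S$, every $B$-disk closure lies in $\cb{-1/2}{1}\cup\cb{1/2}{1}$, while $X_B\subseteq X_A$ with $(b_0,s_0)=(a_0,r_0)$ forces the $B$-disks to cover the connected open set $W=\ob{-1/2}{1}\cup\ob{1/2}{1}$. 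If such a $B$ were classical, its significant disks would have pairwise disjoint closures, so connectedness of $W$ forces a single disk with $W\subseteq\ob{b_m}{s_m}\subseteq\cb{b_m}{s_m}\subseteq\overline W$, whence $\ob{b_m}{s_m}=\inte\overline W=W$; but $W$ is not a disk. (The proposition is of course still true here: the single combined disk $\ob{0}{3/2}$ works, but it is excluded from your $\mathcal S$ by any small $U$ near the point $(0,1.2)$, where $\rho_U(A)=0$.) This refutes your claim that a classical competitor always exists in $\mathcal S$. The repair is to impose the radius budget only on the sets $U_n$, $n\in I$, and to encode locality structurally: clean disks of $A$ are kept as disks of $B$, while non-clean disks are confined to the $U_n$, using $\rho_{U_n}(A)<M_n/2$ to guarantee that a combined disk whose closure meets $K_n$ and whose radius is below the budget stays inside $U(K_n,M_n)$. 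A second, lesser, issue is that the bare condition $X_B\subseteq X_A$ is not obviously closed in the product topology (a covered point can drift to the boundary of a limiting disk, or be covered by disks whose radii tend to $0$); this is why one should work instead with disk-containment conditions such as ``each $\ob{a_k}{r_k}$ is contained in some $\ob{b_m}{s_m}$'', which are closed.
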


By combining Propositions \ref{annularclassicalisation} and \ref{localclassicalisation} we obtain, through a sequence of approximations described in \cite{FMY}, the following improvement of Proposition \ref{mckissickconstruction}.

\begin{proposition}\label{classicalmckissick}
Let $b_0=b_1\in\C$ and $s_0>s_1\geq 0$ and let $\varepsilon>0$. There exists a classical abstract Swiss cheese $A=((a_n,r_n))$ with $a_j = b_j,$ $r_j=s_j$ for $j=1,2$ and $\sum_{n=2}^\infty r_n<\varepsilon$ such that $R(X_A)$ is regular.
\end{proposition}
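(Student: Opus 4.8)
The plan is to separate the two issues at stake: producing a uniform algebra that is regular, and producing a Swiss cheese that is classical with the prescribed concentric annulus $\cb{b_0}{s_0}\supseteq\ob{b_0}{s_1}$ left intact. The first issue is handled once and for all by McKissick's construction, and the second by the classicalisation machinery of Section~5. The crucial simplifying observation is that regularity is inherited by compact subsets: by Proposition~\ref{subsetprops}(3), if $R(X)$ is regular and $Y\subseteq X$ is a compact plane set then $R(Y)$ is regular. Since every classicalisation operation in Section~5 returns a cheese $B$ with $X_B\subseteq X_A$, it suffices to start from a regular (but possibly non-classical) McKissick cheese and then classicalise into a subset while keeping the annulus and the radius budget under control; regularity then comes for free.

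First I would invoke Proposition~\ref{mckissickconstruction} with centre $b_0=b_1$ and radii $s_0>s_1$, and with a small auxiliary parameter $\varepsilon_0>0$ to be fixed later, to obtain an abstract Swiss cheese $A^{(0)}=((a_n,r_n))$ with $a_0=a_1=b_0$, $r_0=s_0$, $r_1=s_1$, $\ar(A^{(0)})<\varepsilon_0$, and $R(X_{A^{(0)}})$ regular. When $s_1>0$ this $A^{(0)}$ is annular; the degenerate case $s_1=0$, where index $1$ drops out of $S_{A^{(0)}}$ and the annulus becomes the single disk $\cb{b_0}{s_0}$, is handled by the same argument with the annular step omitted. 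The difficulty is that the disks produced by K\"orner's Lemma~\ref{kornerlemma} accumulate in thin shells just inside the boundary circles of the rational disks, so $A^{(0)}$ is typically far from classical: its deleted disks overlap one another and crowd against both the hole $\ob{b_0}{s_1}$ and the outer circle. I would next apply Lemma~\ref{nonredundantcheese} to replace $A^{(0)}$ by a redundancy-free cheese with the same associated set, the same outer disk, and $\rho_U$ not increased on any open set $U$, which removes the nested and contained disks and prepares the cheese for the controlled classicalisation theorem.

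The heart of the argument is the classicalisation step, carried out as a sequence of approximations. The error set $E(A^{(0)})$ splits into a part lying near the two concentric bounding circles and a part lying in the open annulus away from them. For the bulk part I would cover $E(A^{(0)})$ away from the circles by a countable admissible family $((K_n,M_n))_{n\in I}$ of pairs, chosen so that each $\overline{U_n}\subseteq\ob{b_0}{s_0}$ misses $\cb{b_0}{s_1}$ and $\rho_{U_n}(A^{(0)})<M_n/2$; the K\"orner estimates guarantee that the radius sum in each such neighbourhood is small, so admissibility can be met. Proposition~\ref{localclassicalisation} then classicalises locally while, by its conclusion~(1), preserving \emph{exactly} every deleted disk that meets no $U_n$ --- in particular the annular hole, whose radius therefore stays equal to $s_1$ --- and, by its conclusion~(2), keeping $\rho_{U_n}$ from growing. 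The part of the error touching the concentric circles is dealt with by Proposition~\ref{annularclassicalisation}, which preserves concentricity ($b_0=b_1$) and the $\delta_{\rm ann}$-budget and so pushes the offending small disks radially off the hole and the outer circle without disturbing the annulus. Interleaving these two operations over the countably many rational disks, with a radius budget of $\varepsilon/2^{m}$ allotted to the $m$-th stage, produces a sequence of cheeses converging in $\scs$; using the compactness of the sets $\mathcal N_M$ together with the upper semicontinuity of $\delta_1$ and continuity of $\delta_2$ recalled after Lemma~\ref{nonredundantcheese}, the limit $A$ is a classical abstract Swiss cheese with $X_A\subseteq X_{A^{(0)}}$, $a_0=a_1=b_0$, $r_0=s_0$, $r_1=s_1$, and $\sum_{n=2}^\infty r_n<\varepsilon$.

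Finally, since $X_A\subseteq X_{A^{(0)}}$ and $R(X_{A^{(0)}})$ is regular, Proposition~\ref{subsetprops}(3) gives that $R(X_A)$ is regular, completing the proof. I expect the main obstacle to be the exact preservation of the hole radius $s_1$: the McKissick disks needed to witness regularity near $\partial\ob{b_0}{s_1}$ are precisely the ones that crowd against the hole, so one cannot simply hide all of the relevant error inside neighbourhoods that avoid $\cb{b_0}{s_1}$. Reconciling this tension --- keeping the annular hole untouched, so that $r_1=s_1$ is preserved by the controlled step, while still clearing the error adjacent to it via the annular step and respecting the global budget $\varepsilon$ --- is the delicate point, and is exactly what forces the combined, iterated use of Propositions~\ref{annularclassicalisation} and~\ref{localclassicalisation} rather than a single application of the plain classicalisation theorem~\ref{classicalisationtheorem}.
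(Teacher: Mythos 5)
Your reduction to Proposition \ref{subsetprops}(3) is fine as far as it goes, but the classicalisation step it rests on cannot be carried out, and the obstruction is not merely technical. Suppose $A=((a_n,r_n))$ is \emph{any} classical abstract Swiss cheese with $a_0=a_1=b_0$, $r_0=s_0$ and $r_1=s_1$. Classicality forces every other deleted closed disk $\cb{a_n}{r_n}$ ($n\geq 2$, $r_n>0$) to lie inside $\ob{b_0}{s_0}$ and (when $s_1>0$) to be disjoint from $\cb{b_0}{s_1}$; consequently the full circles $\{z:\abs{z-b_0}=s_0\}$ and $\{z:\abs{z-b_0}=s_1\}$ are contained in $X_A$. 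But the cheese $A^{(0)}$ supplied by Proposition \ref{mckissickconstruction} carries no guarantee that its deleted disks avoid these two circles, and in the McKissick--K\"orner construction behind it they typically cross both (the K\"orner disks fill thin shells just inside the boundaries of arbitrary rational disks), so $X_{A^{(0)}}$ omits points of each circle. For such an $A^{(0)}$ there is \emph{no} classical cheese with the exact prescribed data whose set is contained in $X_{A^{(0)}}$, so no interleaving of the classicalisation theorems can produce one, and your final appeal to Proposition \ref{subsetprops}(3) has nothing to apply to. Relatedly, you credit the two tools with properties they do not have: Proposition \ref{annularclassicalisation} preserves only the centre, not the radii --- in general the hole grows and the outer disk shrinks, precisely in order to swallow the disks crowding the circles; and Proposition \ref{localclassicalisation} cannot even be invoked for $A^{(0)}$, since its error set contains points of deleted disks lying outside $\ob{b_0}{s_0}$, which can never be covered by sets $K_n\subseteq U_n$ with $\overline{U_n}\subseteq\ob{b_0}{s_0}$; even where it applies, conclusion (1) protects only disks meeting no $U_n$, whereas the hole necessarily meets any admissible family covering the error adjacent to it.

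The route intended by the paper (carried out in \cite{FMY}, and modelled exactly by the proof of Theorem \ref{SEMorris}) is structurally different: it never allows any deleted disk near the two prescribed circles in the first place. One exhausts the open annulus $\{z:s_1<\abs{z-b_0}<s_0\}$ by countably many overlapping closed sub-annuli whose radii converge to $s_1$ and to $s_0$; on each sub-annulus one produces a classical annular cheese with $R$ regular (for instance via Proposition \ref{mckissickconstruction} on that sub-annulus, then Proposition \ref{annularclassicalisation} --- where it is now harmless that the piece's radii move --- then Proposition \ref{subsetprops}(3)); one then combines all the pieces' small disks, together with the exact hole and outer disk, into a single cheese whose error set is confined to the junction regions compactly inside the open annulus, and applies Lemma \ref{nonredundantcheese} and Proposition \ref{localclassicalisation} with admissible pairs covering those junctions, so that the hole and the outer disk are never touched. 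The price of this decomposition is that regularity of the final set is no longer a one-line consequence of Proposition \ref{subsetprops}(3): the final set is not contained in any single set already known to be regular, and near each point of the two prescribed circles it meets infinitely many of the sub-annular pieces, so \cite{FMY} needs a localisation/patching argument for regularity of $R(X)$ to assemble the pieces. Your closing instinct that the crowding of disks against the hole is ``the delicate point'' was correct; the resolution is not to classicalise the crowding away, which is impossible, but to design the cheese so that the crowding never occurs.
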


\section{Comparison of Swiss cheeses}
\label{Comparison}
At present there are two distinct abstract notions of a Swiss cheese; a Swiss cheese in the sense of \cite{feinheath2010,mason2010} and the abstract Swiss cheese as in Definition~\ref{absSCdef}. Recall that a Swiss cheese (as in \cite{feinheath2010,mason2010}) is a pair consisting of a closed disk and a countable collection of open disks. In this definition, all disks are assumed to be non-degenerate (have positive radius). We can describe any Swiss cheese set using a Swiss cheese or an abstract Swiss cheese. There is also a related notion of a disk assignment map, introduced in \cite{mason2010}, which can also be used to describe Swiss cheese sets. We now explain the relationship between these different notions.

In \cite{FMY}, it was described how, given an abstract Swiss cheese $A=((a_n,r_n))$, we can obtain an associated Swiss cheese $\mathbf D_A$ by setting
\[
\mathbf D_A=(\cb{a_0}{r_0},\{\ob{a_n}{r_n}:n\in S_A\}).
\]
In this way we can obtain any Swiss cheese. The mapping of the collection of all abstract Swiss cheeses onto the collection of all Swiss cheeses, described above, is a surjection (necessarily many-to-one) and preserves the associated Swiss cheese set. We also have $\delta(\mathbf D_A)\geq \delta_1(A)$, where $\delta(\mathbf D_A)$ is defined by
\begin{equation}\label{FHDiscrepancy}
\delta(\mathbf D):=r(\overline{\Delta})-\sum_{D\in\mathcal D}r(D)\qquad(\mathbf D=(\overline{\Delta},\mathcal D))
\end{equation}
and $r(D)$ denotes the radius of the disk $D$. (The quantity $\delta(\mathbf D)$ in \eqref{FHDiscrepancy} is called the {\em discrepancy} of $\mathbf D$.) However, we cannot obtain every abstract Swiss cheese from a Swiss cheese. For instance, in a Swiss cheese, the collection of open disks may not contain any repetitions, whereas an abstract Swiss cheese can contain repeated pairs. Moreover, an abstract Swiss cheese can contain degenerate pairs (where $r_n=0$), which is not allowed in the definition of Swiss cheeses.

In \cite{mason2010}, Mason considered {\em disk assignment functions} $d:S\to\mathcal O$, where $S\subseteq\No$ with  $0\in S$, $\mathcal O$ denotes the collection of all open disks and complements of closed disks in the plane, and $\mathbf E_d:=\{\mathbb C \setminus d(0),d(S\setminus \{0\})\}$ is a Swiss cheese.  Note that disk assignment functions allow for repeated disks, whereas the Feinstein-Heath Swiss cheese does not. All disks considered in \cite{mason2010} were assumed to have positive radius. Such a function has the {\em Feinstein-Heath condition} if the Swiss cheese $\mathbf E_d$ (as shown above) satisfies $\delta(\mathbf E_d)>0$, with $\delta(\mathbf E_d)$ given by \eqref{FHDiscrepancy}. There is a surjection from the set of all abstract Swiss cheeses $A=((a_n,r_n))$ with $r_0>0$ onto the set of all disk assignment functions. To construct this map, take $S=S_A\cup\{0\}$ and use the obvious map. In particular, this map preserves the radius sum and the associated Swiss cheese set.

Each Swiss cheese set which can be obtained from an abstract Swiss cheese $A$ with $\delta_1(A)>0$ can also be obtained from a Swiss cheese $\mathbf D=(\overline{\Delta},\mathcal D)$ with $\delta(\mathbf D)>0$ (as defined in \eqref{FHDiscrepancy}). In addition, we can also obtain such a Swiss cheese set from a disk assignment function $d$ satisfying the Feinstein-Heath property.

There are notable difference between the three methods used to prove the Feinstein-Heath classicalisation theorem, which is stated as Proposition~\ref{classicalisationtheorem}. In \cite{feinheath2010}, Feinstein and Heath constructed a partial order on a suitable collection and used Zorn's lemma to obtain a maximal object with respect to this partial order, which was associated with a classical Swiss cheese. Mason \cite{mason2010} used transfinite induction to construct a family of self maps on the set $H$ of all disk assignment functions with the Feinstein-Heath condition, which eventually stabilised at a disk assignment function associated with a classical Swiss cheese. In \cite{FMY}, a compact collection of abstract Swiss cheeses was constructed so that maximising the discrepancy function $\delta_1$ and then minimising the function $\delta_2$ yields a classical abstract Swiss cheese.

The classical Swiss cheese obtained by Mason's method need not be maximal in the sense of the Feinstein-Heath partial order, maximise the function $\delta_1$ or minimise the function $\delta_2$. We refer the reader to the respective papers for more details on these approaches. Consider the following example.

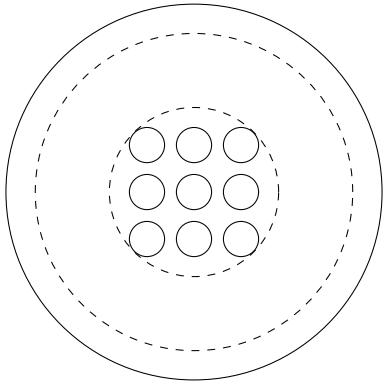
\begin{figure}[htbp]
\begin{tikzpicture}
\draw(   0,   0) circle [radius= 2.5];
\draw (-0.625,-0.625) circle [radius=0.234];
\draw (   0,-0.625) circle [radius=0.234];
\draw (0.625,-0.625) circle [radius=0.234];
\draw (-0.625,   0) circle [radius=0.234];
\draw (   0,   0) circle [radius=0.234];
\draw (0.625,   0) circle [radius=0.234];
\draw (-0.625,0.625) circle [radius=0.234];
\draw (   0,0.625) circle [radius=0.234];
\draw (0.625,0.625) circle [radius=0.234];
\draw[dashed] (   0,   0) circle [radius=1.125];
\draw[dashed] (   0,   0) circle [radius=2.11];
\end{tikzpicture}
\caption{A classical Swiss cheese set where the discrepancy can be improved.}
\label{counterexample_to_FH_Mason_equiv}
\end{figure}

Let $\overline{\Delta}$ be the closed unit disk, let $\varepsilon>0$ be small. Let $z_1,\dotsc,z_9$ be the distinct points whose real and imaginary parts are either $0$ or $\pm1/4$. Take $r=3/32$. Let $D_k$ be the open disks centred at $z_k$ of radius $r$ for $k=1,2,\dotsc,9$, as shown in Figure \ref{counterexample_to_FH_Mason_equiv}. Then the $\overline{D_k}$ are disjoint, the resulting Swiss cheese (or abstract Swiss cheese) is classical and satisfies the conditions of the Feinstein-Heath classicalisation theorem. However, applying Mason's construction will yield the same Swiss cheese, but both the Feinstein-Heath approach and the abstract Swiss cheese approach will yield new Swiss cheeses. However, these new (abstract) Swiss cheeses can be different.

From the definition of the partial order in \cite{feinheath2010}, it is easy to see that if an abstract Swiss cheese $A$ maximises $\delta_1$ and minimises $\delta_2$, on a suitable compact collection of Swiss cheeses (see \cite{FMY}), then the corresponding Swiss cheese must be maximal in the Feinstein-Heath partial order.

The example above, illustrated by Figure \ref{counterexample_to_FH_Mason_equiv}, can be used to show that a Swiss cheese which is maximal in the Feinstein-Heath partial order need not maximise discrepancy. The dashed lines (in Figure \ref{counterexample_to_FH_Mason_equiv}) show the maximum and minimum disks which could be used to replace the collection of smaller disks. If, for example, we form two Swiss cheeses by replacing the collection by the maximum and minimum disks, then these Swiss cheeses are not comparable in the Feinstein-Heath partial order. The (abstract) Swiss cheese obtained by replacing by the minimal disk maximises discrepancy, but replacing by the maximal disk does not change the discrepancy.

\section{Semiclassicalisation}
Let $A$ be an abstract Swiss cheese with positive discrepancy. By the Feinstein-Heath theorem, we can find a classical abstract Swiss cheese $B$ such that $X_B\subseteq X_A$; in particular, we can obtain a semiclassical abstract Swiss cheese. We describe an inductive process where at each step we seek to increase the discrepancy of the abstract Swiss cheese by combining overlapping open disks and/or pulling in the closed disk. We show that this process produces a sequence which converges to a semiclassical abstract Swiss cheese. This process was originally described in the third author's MSc dissertation at the University of Nottingham.

The following elementary lemmas are very minor modification of the lemmas in \cite{FMY}, and we omit details of the proofs, and are illustrated in Figure~\ref{elementary_lemma_fig}.

\begin{lemma}\label{combinediscs}
Let $a_1,a_2\in \mathbb C$ and $r_1$ and $r_2$ be positive real numbers such that $\ob{a_1}{r_1}\cap \ob{a_2}{r_2}\neq \emptyset$. Then there exists a unique pair $(a,r)\in \mathbb{C}\times (0,\infty)$ such that $r< r_1+r_2,$ $\ob{a_1}{r_1}\cup \ob{a_2}{r_2}\subseteq \ob{a}{r},$ and $r$ is minimal.
\end{lemma}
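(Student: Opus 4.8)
The plan is to identify the pair $(a,r)$ as the centre and radius of the smallest open disk containing the union, and to compute it explicitly after normalising by an isometry. First I would record the elementary fact that, for disks, open and closed containment coincide: $\ob{a'}{r'}\supseteq\ob{a_i}{r_i}$ holds if and only if $\abs{a'-a_i}+r_i\leq r'$, if and only if $\cb{a'}{r'}\supseteq\cb{a_i}{r_i}$ (the forward direction is the triangle inequality, and strictness of $\abs{z-a_i}<r_i$ is preserved). This lets me work with the compact set $\cb{a_1}{r_1}\cup\cb{a_2}{r_2}$ throughout. Since translations and rotations preserve all the relevant quantities, I would then assume $a_1=0$ and $a_2=d:=\abs{a_1-a_2}\geq0$, so that both centres lie on the real axis.

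Next I would split into two cases according to whether one disk contains the other. If $d+r_2\leq r_1$ (so $\ob{a_2}{r_2}\subseteq\ob{a_1}{r_1}$, and symmetrically for the other inclusion), the candidate is simply $(a,r)=(a_1,r_1)$, the larger disk itself. Otherwise $d+r_2>r_1$ and $d+r_1>r_2$, and the union reaches furthest along the central axis at the two points $p_1=a_1-r_1u$ and $p_2=a_2+r_2u$, where $u=(a_2-a_1)/d$; here $d>0$ automatically. These satisfy $\abs{p_1-p_2}=r_1+r_2+d$, and the candidate is the disk with this diameter, namely $r=(r_1+r_2+d)/2$ and $a=\tfrac12(a_1+a_2)+\tfrac12(r_2-r_1)u$. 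In either case I would verify containment directly from the distance identity $\abs{a-a_i}+r_i=r$ for $i=1,2$, which by the first paragraph gives $\ob{a_i}{r_i}\subseteq\ob{a}{r}$. The bound $r<r_1+r_2$ then follows from positivity of the radii in the nesting case and from the intersection hypothesis $d<r_1+r_2$ in the crossing case.

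For minimality and uniqueness, the key observation is that any enclosing closed disk $\cb{a'}{r'}$ must contain a pair of points at distance exactly $2r$ (namely $p_1,p_2$ in the crossing case, or any diameter of $\cb{a_1}{r_1}$ in the nesting case), whence $2r'\geq 2r$ and $r'\geq r$; this establishes that $r$ is minimal. If moreover $r'=r$, then a closed disk of radius $r$ containing two points at distance $2r$ must have those points as antipodal endpoints of a diameter, forcing $a'$ to be their midpoint, which is exactly $a$. This pins down the pair uniquely. The main obstacle I anticipate is not any single computation but the bookkeeping at the boundary between the two cases — ensuring the unified description is consistent (the minimal radius is $\max\{r_1,r_2,(r_1+r_2+d)/2\}$) and that the uniqueness argument is stated so as to cover the nesting case, where the ``extreme points'' argument must be applied to a diameter of the larger disk rather than to $p_1,p_2$.
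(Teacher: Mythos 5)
Your proof is correct, and there is nothing in the paper to contrast it with: the paper explicitly omits the proofs of Lemmas \ref{combinediscs} and \ref{outsidedisc}, deferring to the lemmas in \cite{FMY} and to the illustration in Figure~\ref{elementary_lemma_fig}. Your construction — reducing open-disk containment to the inequality $\abs{a'-a_i}+r_i\leq r'$, splitting into the nested and crossing cases, and taking the disk with diameter $[p_1,p_2]$ in the latter — is exactly the geometric content that Figure~\ref{combininglem} depicts, and your minimality/uniqueness argument via two points at distance $2r$ is the standard way to finish. The only nitpick is that in the nested case the relation $\abs{a-a_2}+r_2=r$ should read $\abs{a-a_2}+r_2\leq r$, which is all that containment requires; this does not affect the argument.
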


\begin{lemma} \label{outsidedisc}
Let $a_1,a_2\in \mathbb C$ and $r_1,r_2>0$ be such that $\ob{a_1}{r_1}\nsubseteq \ob{a_2}{r_2}$ and $\ob{a_2}{r_2}\nsubseteq \ob{a_1}{r_1}$. Then there exists a unique pair $(a,r)\in \mathbb{C}\times (0,\infty)$ such that $r>r_1-r_2$,  $\cb{a}{r}\subseteq \cb{a_1}{r_1}$,  $\cb{a}{r}\cap \ob{a_2}{r_2}=\emptyset$, and $r$ is maximal.
\end{lemma}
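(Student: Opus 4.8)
The plan is to reduce the two geometric conditions to a pair of scalar inequalities and then solve a one–dimensional optimisation problem, reading off both the maximiser and its uniqueness. Writing $d:=\abs{a_1-a_2}$, I would first record the elementary equivalences: $\cb{a}{r}\subseteq\cb{a_1}{r_1}$ holds exactly when $\abs{a-a_1}+r\leq r_1$, while $\cb{a}{r}\cap\ob{a_2}{r_2}=\emptyset$ holds exactly when $\abs{a-a_2}\geq r+r_2$ (a point of the closed disk meets the open disk iff the centres are closer than the sum of the radii). I would also translate the hypotheses: since $\ob{a_1}{r_1}\subseteq\ob{a_2}{r_2}$ iff $d+r_1\leq r_2$, the two non-containment assumptions are precisely $d>r_2-r_1$ and $d>r_1-r_2$, i.e.\ $d>\abs{r_1-r_2}$; in particular $d>0$, so the line through $a_1$ and $a_2$ is well defined. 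Let $\hat u:=(a_1-a_2)/d$ be the unit vector pointing from $a_2$ to $a_1$.

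Next I would show the optimal centre lies on the ray $a_1+\rho\hat u$ with $\rho\geq 0$. Given any feasible pair $(a,r)$, set $\rho=\abs{a-a_1}$ and replace $a$ by $a':=a_1+\rho\hat u$; the first constraint is unchanged (same $\rho$), while $\abs{a'-a_2}=\rho+d\geq\abs{a-a_2}$ keeps the second constraint valid. Hence maximising $r$ reduces to maximising $\min(r_1-\rho,\,\rho+d-r_2)$ over $\rho\geq 0$. The first term decreases and the second increases in $\rho$, so the maximum occurs at the crossing point $\rho^\ast=(r_1+r_2-d)/2$ when this is non-negative (the case $d\leq r_1+r_2$), giving $r^\ast=(d+r_1-r_2)/2$, and otherwise at $\rho=0$, giving $r^\ast=r_1$; compactly $r^\ast=\min\{r_1,(d+r_1-r_2)/2\}$. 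That $r^\ast>0$ uses $d>r_2-r_1$, and $r^\ast>r_1-r_2$ uses $d>r_1-r_2$, so the maximiser lies in $(0,\infty)$ and satisfies the stated inequality.

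For uniqueness I would argue that, once $r=r^\ast$ is fixed, the centre is forced. Put $\rho^\ast=r_1-r^\ast$ and $R^\ast=r^\ast+r_2$; any feasible centre satisfies $\abs{a-a_1}\leq\rho^\ast$ and $\abs{a-a_2}\geq R^\ast$. In the overlapping case $d<r_1+r_2$ one has the identity $R^\ast=\rho^\ast+d$, whence the triangle inequality gives $R^\ast\leq\abs{a-a_2}\leq\abs{a-a_1}+d\leq\rho^\ast+d=R^\ast$; equality throughout forces $\abs{a-a_1}=\rho^\ast$ with $a_1$ on the segment joining $a$ and $a_2$, i.e.\ $a=a_1+\rho^\ast\hat u$ is the unique maximiser. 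In the remaining case $d\geq r_1+r_2$ one has $\rho^\ast=0$, so $\abs{a-a_1}\leq 0$ forces $a=a_1$ directly, and $\abs{a_1-a_2}=d\geq r_1+r_2=R^\ast$ confirms feasibility.

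I expect the main obstacle to be the uniqueness step rather than the optimisation: the existence and the value of $r^\ast$ follow from a routine one-variable computation, but pinning the centre to a single point requires the rigidity of the triangle inequality (its equality case), and some care is needed to treat the two geometric regimes — overlapping versus disjoint disks — uniformly. Everything else is bookkeeping with the scalar inequalities recorded at the outset.
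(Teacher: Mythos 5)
Your proof is correct and complete: the scalar reformulations of the two constraints are right, the projection onto the ray $a_1+\rho\hat u$ preserves feasibility, the one-variable optimisation gives $r^\ast=\min\{r_1,(d+r_1-r_2)/2\}$ with $r^\ast>\max\{0,r_1-r_2\}$, and the equality case of the triangle inequality pins down the centre uniquely in both regimes. Note that the paper itself omits the proof of this lemma (deferring to the manuscript \cite{FMY}), so there is no in-paper argument to diverge from; your approach is exactly the natural one suggested by the paper's Figure 2(B) — the maximal closed subdisk of $\cb{a_1}{r_1}\setminus\ob{a_2}{r_2}$ is centred on the line through $a_1$ and $a_2$, on the far side from $a_2$, and is determined by tangency — so your write-up can be regarded as supplying the omitted details.
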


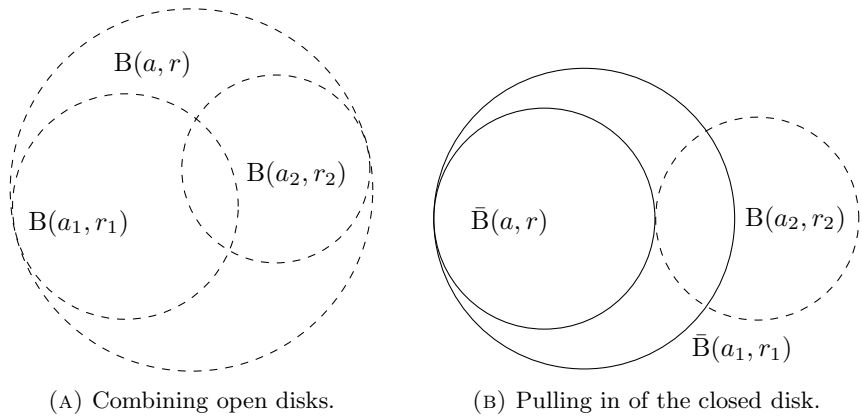
\begin{figure}[htbp]
\centering
\begin{subfigure}[b]{0.45\textwidth}\centering
\begin{tikzpicture}
\draw[dashed] (-0.5,   0) circle [radius= 1.5];
\draw[dashed] ( 1.5, 0.5) circle [radius=1.25];
\draw[dashed] (0.379,0.22) circle [radius=2.41];
\node at (-1.12,-0.219) {$\ob{a_1}{r_1}$};
\node at (1.78,0.43) {$\ob{a_2}{r_2}$};
\node at (-0.136,1.89) {$\ob{a}{r}$};
\end{tikzpicture}
\caption{Combining open disks.}
\label{combininglem}
\end{subfigure}
~
\begin{subfigure}[b]{0.45\textwidth}
	\centering
\begin{tikzpicture}
\draw(-0.5,   0) circle [radius=   2];
\draw[dashed] ( 1.8,   0) circle [radius=1.35];
\draw (-1.03,   0) circle [radius=1.47];
\node at (-1.49,-0.0409) {$\cb{a}{r}$};
\node at (2.31,-0.00585) {$\ob{a_2}{r_2}$};
\node at (1.6,-1.73) {$\cb{a_1}{r_1}$};
\end{tikzpicture}
\caption{Pulling in of the closed disk.}
\label{pullinginlem}
\end{subfigure}
\caption{Elementary lemmas for combining and pulling in disks.}
\label{elementary_lemma_fig}
\end{figure}

We call the open disk $\ob{a}{r}$ in Lemma \ref{combinediscs} the \emph{minimal radius open disk} containing $\ob{a_1}{r_1}\cup\ob{a_2}{r_2}$, and we call the closed disk $\cb{a}{r}$ in Lemma \ref{outsidedisc} the \emph{maximal radius closed subdisk} of $\cb{a_1}{r_1}\setminus\ob{a_2}{r_2}$.

Let $D_1$ and $D_2$ be open disks in $\C$ with $D_1\cap D_2 \neq \emptyset$. If we draw a line through the centres of these two disks (if the centres coincide, we just draw any line through the centre), we define the length of the line segment in $D_1\cap D_2$ to be the \emph{length of overlap}. See Figure~3 for an illustration. Let $A=((a_n,r_n))$ be an abstract Swiss cheese with finite discrepancy, and assume for some $k,\ell\in S_A$ we have $\ob{a_k}{r_k}\cap \ob{a_\ell}{r_\ell}\neq \emptyset$. Then by replacing these two open disks by the minimal radius open disk containing them, we can increase the discrepancy of $A$ by half  the length of overlap of these two open disks. A more formal formulation of replacing (and also discarding) disks is to be made in the proof of Theorem~\ref{Semiclassicalisation}.

If there exist pairs of open disks in $A$ which have non-empty intersection, there exists a pair of them with maximal length of overlap. To see this, first notice that the supremum length of overlap is finite because $\delta_1(A)$ is finite, and we denote it by $M$. If $M=0$ then there is nothing to prove. Otherwise, there are only finitely many pairs of open disks with length of overlap larger than $M/3$, because $\delta_1(A)$ is finite. Then among these pairs of open disks, we can find one pair that achieves maximal length of overlap. Let $B_1$, $B_2$, $D_1$ and $D_2$ be open disks in $\C$ with $B_1\subseteq D_1$, $B_2\subseteq D_2$ and $B_1\cap B_2 \neq \emptyset$. We claim that the length of overlap of $B_1$ and $B_2$ is no larger than the length of overlap of $D_1$ and $D_2$. The proof is elementary and we leave the details to the reader.

Let $\overline{D_0}$ be a closed disk and $D_1$ be an open disk in $\C$, both with positive radii, such that $D_0\nsubseteq D_1$ and $D_1\nsubseteq D_0$. If we draw a line through the centres of these two disks, we define the length of the line segment contained in $D_1\setminus \overline{D_0}$ to be the \emph{extrusion length} of $D_1$ from $\overline{D_0}$. See Figure~3 for an illustration. Let $A=((a_n,r_n))$ be an abstract Swiss cheese with positive discrepancy, and assume for some $k\in S_A$ we have $\ob{a_k}{r_k}\nsubseteq \cb{a_0}{r_0}$. Then by replacing $\cb{a_0}{r_0}$ by the maximal radius closed subdisk of $\cb{a_0}{r_0}\setminus\ob{a_k}{r_k}$ and discarding $\ob{a_k}{r_k}$, we can increase the discrepancy of $A$ by half the extrusion length of $\ob{a_k}{r_k}$ from $\cb{a_0}{r_0}$. If there exists $k\in\N$ with $\ob{a_k}{r_k}\nsubseteq\cb{a_0}{r_0}$ then there exists $\ell\in\N$ such that the length of extrusion of $\ob{a_\ell}{r_\ell}$ from $\cb{a_0}{r_0}$ is maximal. (The proof is similar to that for the maximal length of overlap above.)  Let $\overline{B_0}$, $\overline{D_0}$ be closed disks and $B_1$ and $D_1$ be open disks in $\C$, with $\overline{B_0}\subseteq \overline{D_0}$, $D_1\subseteq B_1$ and $D_1\nsubseteq \overline{D_0}$. Then the extrusion length of $D_1$ from $\overline{D_0}$ is no larger than the extrusion length of  $B_1$ from $\overline{B_0}$.

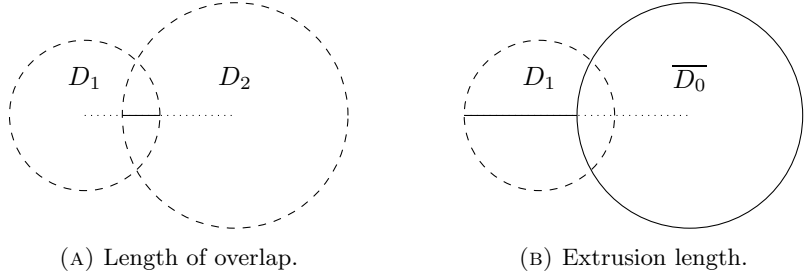
\begin{figure}[htbp] \label{length}
\centering
\begin{subfigure}[b]{0.45\textwidth}\centering
\begin{tikzpicture}
\draw[dashed] (0,   0) circle [radius= 1];
\draw[dashed] ( 2, 0) circle [radius=1.5];
\draw[dotted] (0,0) -- (2, 0) ;
\draw (0.5,0)--(1,0);
\node at (0,0.5) {$D_1$};
\node at (2,0.5) {$D_2$};
\end{tikzpicture}
\caption{Length of overlap.}
\label{length_of_overlapping}
\end{subfigure}
~
\begin{subfigure}[b]{0.45\textwidth}
	\centering
\begin{tikzpicture}
\draw[dashed] (0,   0) circle [radius= 1];
\draw ( 2, 0) circle [radius=1.5];
\draw[dotted] (-1,0) -- (2, 0) ;
\draw (-1,0)--(0.5,0);
\node at (0,0.5) {$D_1$};
\node at (2,0.5) {$\overline{D_0}$};
\end{tikzpicture}
\caption{Extrusion length.}
\label{length_of_exterior}
\end{subfigure}
\caption{Length of overlap and extrusion length shown by the unbroken line.}
\end{figure}

\begin{theorem}[Semiclassicalisation] \label{Semiclassicalisation}
Let $A=((a_\ell,r_\ell))$ be an abstract Swiss cheese with $\delta_1(A)>0$. Then there exists a semiclassical abstract Swiss cheese $B$ with $X_B\subseteq X_A$ and $\delta_1(B)\geq \delta_1(A)$.
\end{theorem}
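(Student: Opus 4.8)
The plan is to realise the informal process described above as an explicit sequence of abstract Swiss cheeses $A^{(0)}=A,A^{(1)},A^{(2)},\dots$ and to extract $B$ as its coordinatewise limit. Since $\delta_1(A)>0$ we have $\delta_1(A)>-\infty$, so $\rho(A)<\infty$ and $A^{(0)}$ has finite discrepancy. At stage $m$, if $A^{(m)}$ has no pair of overlapping significant open disks and no open disk extruding from $\cb{a_0^{(m)}}{r_0^{(m)}}$, the process stabilises (set $A^{(m')}=A^{(m)}$ for all $m'\ge m$). Otherwise let $\Omega_m$ be the maximal length of overlap of a pair of significant open disks and $\Xi_m$ the maximal extrusion length of an open disk; by the discussion preceding the theorem both suprema are attained. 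If $\Omega_m\ge\Xi_m$ I would replace the maximally overlapping pair by the minimal radius open disk containing them (Lemma~\ref{combinediscs}); otherwise I would pull in the closed disk past the maximally extruding open disk and discard that disk (Lemma~\ref{outsidedisc}). To keep the indexing stable I fix a bookkeeping rule: a combined disk is stored at the smaller of the two indices, and any vacated or discarded index is assigned radius $0$ permanently. By the computations recorded before the theorem, each operation increases $\delta_1$ by exactly $\tfrac12\max(\Omega_m,\Xi_m)$, and in both cases $X_{A^{(m+1)}}\subseteq X_{A^{(m)}}$.

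Next I would establish convergence. Since $\delta_1$ never drops below $\delta_1(A)>0$, we have $\sum_{n\ge1}r_n^{(m)}\le r_0^{(m)}\le r_0^{(0)}$, so all radii are bounded by $r_0^{(0)}$, while the base radii $r_0^{(m)}$ are non-increasing. The sequence $\delta_1(A^{(m)})$ is non-decreasing and bounded above by $r_0^{(0)}$, hence convergent, and its increments $\tfrac12\max(\Omega_m,\Xi_m)$ are summable, so $\Omega_m\to0$ and $\Xi_m\to0$. For each fixed index $n$ the radius $r_n^{(m)}$ is eventually monotone — it increases while the disk at index $n$ is a ``keeper'', and once it is absorbed into a smaller index or discarded it is $0$ forever — so $r_n^{(m)}$ converges; and at a combining step the centre satisfies $\abs{a_n^{(m+1)}-a_n^{(m)}}\le r_n^{(m+1)}-r_n^{(m)}$, whose total over $m$ telescopes to at most $r_0^{(0)}$, so $a_n^{(m)}$ converges too. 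Thus $A^{(m)}\to B=((b_n,s_n))$ in the product topology. Because $\delta_1$ is the difference of the continuous map $r_0$ and the lower semicontinuous map $\sum_{n\ge1}r_n$, it is upper semicontinuous on $\scs$, giving $\delta_1(B)\ge\lim_m\delta_1(A^{(m)})\ge\delta_1(A)$; moreover $s_0\ge\delta_1(A)>0$ and $\sum_{n\ge1}s_n<\infty$ by Fatou's lemma.

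To see that $B$ is semiclassical I would argue by contradiction using $\Omega_m,\Xi_m\to0$. If two significant open disks of $B$, say at indices $k,\ell$, overlapped in a segment of positive length $2\eta$, then since $r_k^{(m)}\to s_k>0$ and $r_\ell^{(m)}\to s_\ell>0$ these indices are eventually significant in $A^{(m)}$, and the length of overlap depends continuously on the two centres and radii, so the overlap of the index-$k$ and index-$\ell$ disks of $A^{(m)}$ would exceed $\eta$ for large $m$, forcing $\Omega_m>\eta$ — a contradiction. The same argument with $\Xi_m$ shows no open disk of $B$ extrudes from $\cb{b_0}{s_0}$; since extrusion length $0$ is equivalent to $\ob{b_k}{s_k}\subseteq\ob{b_0}{s_0}$, this gives precisely the containment and disjointness conditions defining a semiclassical abstract Swiss cheese.

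The hard part will be the inclusion $X_B\subseteq X_A$, where a naive limit argument fails on boundary points. Given $z\notin X_A$ I would track $z$ through the construction: base disks are nested, so once $z$ lies outside some $\cb{a_0^{(m)}}{r_0^{(m)}}$ it lies outside $\cb{b_0}{s_0}$ and hence outside $X_B$; otherwise, using that combining only enlarges the disk containing $z$ and that discarding the disk containing $z$ throws $z$ outside the new (disjoint) base disk, one checks that at every stage $z$ lies in an open disk of radius at least the radius $r$ of the first disk of $A$ deleting it. By the ``store at the smaller index'' rule this tracked disk has a non-increasing index, which stabilises at some $k^*$; thereafter its radius is non-decreasing up to $s_{k^*}\ge r>0$ and its centre satisfies the telescoped bound $\abs{a_{k^*}^{(m)}-b_{k^*}}\le s_{k^*}-r_{k^*}^{(m)}$. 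Combining this with the strict inequality $\abs{z-a_{k^*}^{(m)}}<r_{k^*}^{(m)}$ yields $\abs{z-b_{k^*}}<s_{k^*}$, so $z\in\ob{b_{k^*}}{s_{k^*}}$ and $z\notin X_B$. The essential point — and the main obstacle — is exactly this upgrade from the closed to the open disk: tracking $z$ only gives $z\in\cb{b_{k^*}}{s_{k^*}}$ in the limit, and it is the monotone growth of the tracked disk that rules out $z$ lying on the limiting boundary circle and thereby being wrongly retained in $X_B$.
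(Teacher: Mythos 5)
Your proposal is correct and takes essentially the same route as the paper's proof: the same two elementary operations (Lemmas \ref{combinediscs} and \ref{outsidedisc}) with Mason's smaller-index bookkeeping, a coordinatewise limit obtained from nested disks, upper semicontinuity of $\delta_1$ for $\delta_1(B)\geq\delta_1(A)$, a bounded-discrepancy contradiction for semiclassicality, and the same nested-disk containment argument (the paper's \eqref{masontrick}) for $X_B\subseteq X_A$. The only deviations --- greedily performing whichever of combining/pulling-in gives the larger gain rather than alternating the two, and deriving the final contradiction from $\Omega_m,\Xi_m\to 0$ rather than from non-decreasing overlap/extrusion lengths --- are cosmetic.
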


We remark that Theorem~\ref{Semiclassicalisation} is a corollary of the Feinstein-Heath classicalisation theorem (Proposition~\ref{classicalisationtheorem}). However, the proof given here uses an inductive construction that terminates at the first countable ordinal. In contrast, the proof given in \cite{mason2010} used a transfinite induction which terminates before the first uncountable ordinal.

\begin{proof}
We construct a suitable sequence of abstract Swiss cheeses $(A^{(m)})_{m=1}^\infty$ by induction such that $\delta_1(A^{(m)})$ is non-decreasing and $X_{A^{(m+1)}}\subseteq X_{A^{(m)}}$. In this construction, we apply, where appropriate, Lemmas \ref{combinediscs} and \ref{outsidedisc} alternately.

Let $A^{(0)}=A$, and assume we have constructed $A^{(2m)}$ for some $m\geq 0$. If, for all distinct $k,\ell\in S_{A^{(2m)}}$, we have $\ob{a^{(2m)}_k}{r^{(2m)}_k}\cap \ob{a^{(2m)}_\ell}{r^{(2m)}_\ell}=\emptyset$, then let $A^{(2m+1)}=A^{(2m)}$. Otherwise we can find $k,\ell\in S_{A^{(2m)}}$, with $k<\ell$ such that $\ob{a^{(2m)}_k}{r^{(2m)}_k}\cap \ob{a^{(2m)}_\ell}{r^{(2m)}_\ell}\neq \emptyset$ and they achieve maximal length of overlap. In this case we set $A^{(2m+1)}=((a^{(2m+1)}_n,r^{(2k+1)}_n))$ where $a^{(2m+1)}_n=a^{(2m)}_n$ and $r^{(2m+1)}_n = r^{(2m)}_n$ for $n \neq k,\ell$; $a^{(2m+1)}_k, r^{(2m+1)}_k$ equal to the centre and radius of the minimal radius open disk containing $\ob{a^{(2m)}_k}{r^{(2m)}_k}\cup \ob{a^{(2m)}_\ell}{r^{(2m)}_\ell}$; and $a^{(2m+1)}_\ell=0$, $r^{(2m+1)}_\ell=0$. (Note that the technique of allocating the minimal radius open disk to the smaller index is due to Mason \cite{mason2010}.)

If, for all $k\in S_{A^{(2m+1)}}$, we have $\ob{a^{(2m+1)}_k}{r^{(2m+1)}_k}\subseteq \cb{a^{(2m+1)}_0}{r^{(2m+1)}_0}$, we set $A^{(2m+2)}=A^{(2m+1)}$. Otherwise, we can find $k\in S_{A^{(2m+1)}}$ such that
\[
\ob{a^{(2m+1)}_k}{r^{(2m+1)}_k}\nsubseteq \cb{a^{(2m+1)}_0}{r^{(2m+1)}_0}
\]
and the extrusion length of $\ob{a^{(2m+1)}_k}{r^{(2m+1)}_k}$ from $\cb{a^{(2m+1)}_0}{r^{(2m+1)}_0}$ is maximal. In this case let $A^{(2m+2)} = ((a^{(2m+2)}_n, r^{(2m+2)}_n))$ where $a^{(2m+2)}_n = a^{(2m+1)}_n$,  $r^{(2m+2)}_n = r^{(2m+1)}_n$ for $n \neq 0,k$; $a^{(2m+2)}_0, r^{(2m+2)}_0$ equal to the centre and radius of the maximal radius closed subdisk of $\cb{a^{(2m+1)}_0}{r^{(2m+1)}_0}\setminus \ob{a^{(2m+1)}_k}{r^{(2m+1)}_k}$; $a^{(2m+2)}_k=0$ and $r^{(2m+2)}_k=0$.

In this way we have constructed a sequence of abstract Swiss cheeses $(A^{(m)})_{m\geq 1}$. Let $m\in\No$ and $k\in\N$. If $\ob{a_k^{(2m+1)}}{r_k^{(2m+1)}}$ is not equal to $\ob{a_k^{(2m)}}{r_k^{(2m)}}$ then either $\ob{a_k^{(2m)}}{r_k^{(2m)}}\subseteq\ob{a_k^{(2m+1)}}{r_k^{(2m+1)}}$ or
\begin{equation}\label{masontrick}
\ob{a_k^{(2m)}}{r_k^{(2m)}}\subseteq\ob{a_\ell^{(2m+1)}}{r_\ell^{(2m+1)}}\quad\text{for some $\ell\in\N$ with $\ell<k$}
\end{equation}
and $r_k^{(m')}=0$ for all $m'\geq 2m+1$. If $\ob{a_k^{(2m+1)}}{r_k^{(2m+1)}}\neq\ob{a_k^{(2m+2)}}{r_k^{(2m+2)}}$ then we must have $\ob{a_k^{(2m+1)}}{r_k^{(2m+1)}}\subseteq\C \setminus\cb{a_0^{(2m+2)}}{r_0^{(2m+2)}}$ and $r_k^{(m')}=0$ for all $m'\geq 2m+2$.

We notice that the sequence of closed disks $(\cb{a^{(m)}_0}{r^{(m)}_0})_m$ is nested decreasing, thus we have $a_0^{(m)}\to b_0\in\C$ and $r_0^{(m)}\to s_0\geq 0$. Such limits exist according to \cite[Proposition~2.3]{feinheath2010}.
For each $k\geq 1$, we observe that the sequence of open disks $(\ob{a^{(m)}_k}{r^{(m)}_k})_{m\geq 1}$ is either bounded and nested increasing, or there exists $N\geq 1$ such that, for all $m\geq N$, we have $a^{(m)}_k=r^{(m)}_k=0$. In both cases, we see the sequences $(a^{(m)}_k)_{m}$ and $(r^{(m)}_k)_{m}$ converge and we denote the limits by $b_k$ and $s_k$, respectively. Letting $B = ((b_n,s_n))$ we have constructed the limit abstract Swiss cheese. We see that $A^{(m)}\to B$ in $\mathcal{F}$ with the product topology.

From the construction it is clear that $\delta_1(A^{(m)})\geq \delta_1(A)$ and $(\delta_1({A^{(m)}}))$ is non-decreasing. Since $\delta_1$ is upper semicontinuous and $A^{(m)}\to B$ we see that $\delta_1(B)\geq \delta_1(A)>0$. In particular, we have $s_0>0$. We show that $X_B\subseteq X_A$. It is clear that $\cb{b_0}{s_0}\subseteq \cb{a_0}{r_0}$. Let $z\in\C\setminus X_{A}$, we show $z\notin X_B$. If $z\notin\cb{b_0}{s_0}$ then $z\in\C\setminus X_B$. Otherwise, we can find $k\in S_{A}$ such that $z\in\ob{a_k}{r_k}\cap\cb{b_0}{s_0}$. Suppose there exists $N\in \mathbb N$ such that $r_k^{(m)}=0$ for all $m\geq N$. Since $\cb{a_0^{(m)}}{r_0^{(m)}}$ has non-empty intersection with $\ob{a_k}{r_k}$, by \eqref{masontrick}, there exists a non-increasing sequence $(\ell_m)_{m\geq N}$ of integers with $1\leq\ell_m<k$ such that $\ob{a_k}{r_k}\subseteq\ob{a_{\ell_m}^{(m)}}{r_{\ell_m}^{(m)}}$ for all $m\geq N$. This sequence $(\ell_m)_{m\geq N}$ is eventually constant, say $\ell_m=\ell$ for all $m\geq N_1$. Thus $\ob{a_k}{r_k}\subseteq\ob{a_\ell^{(m)}}{r_\ell^{(m)}}$ for all $m\geq N_1$ and it follows that $\ob{a_k}{r_k}\subseteq\ob{b_\ell}{s_\ell}$. Hence $z\notin X_B$ as required.

We show that $B$ is a semiclassical abstract Swiss cheese.
Assume towards a contradiction that $B$ is not semiclassical. There are two cases. The first case is there exists $k,\ell\in S_B$ such that $\ob{b_k}{s_k}\cap \ob{b_\ell}{s_\ell} \neq \emptyset$. Then there exists $N\geq 1$ such that for all $m\geq N$ we have $\ob{a^{(m)}_k}{r^{(m)}_k}\cap \ob{a^{(m)}_\ell}{r^{(m)}_\ell} \neq \emptyset$. Let $L_m$ be the length of overlap of the disks $\ob{a^{(m)}_k}{r^{(m)}_k}$ and $\ob{a^{(m)}_\ell}{r^{(m)}_\ell}$,
which is positive and non-decreasing. From the inductive construction, it is clear that $\delta_1(A^{(2m+1)})-\delta_1(A^{(2m)})\geq L_{2m}/2$ for all $m\geq N$, which contradicts $\delta_1(A^{(m)})\leq r_0$ for all $m$. The second case is there exists $k\in S_B$ such that $\ob{b_k}{s_k}$ is not contained in $\cb{b_0}{s_0}$. Then there exists $N\geq 1$ such that $\ob{a^{(m)}_k}{r^{(m)}_k}$ is not contained in $\cb{a^{(m)}_0}{r^{(m)}_0}$ for all $m\geq N$. Let $I_m$ be the extrusion length of $\ob{a^{(m)}_k}{r^{(m)}_k}$ from $\cb{a^{(m)}_0}{r^{(m)}_0}$. Clearly $I_m>0$ and is non-decreasing. For all $m\geq N$ we have $\delta_1(A^{(2m+2)})-\delta_1(A^{(2m+1)})\geq I_{2m+1}/2$, which is a contradiction since $\delta_1(A^{(M)})\leq r_0$ for all $m$.
\end{proof}

\section{A classical counterexample to the conjecture of S. E. Morris}
In \cite{feinstein2004}, Feinstein gave a counterexample to a conjecture of S. E. Morris by constructing a Swiss cheese set $X$ where $R(X)$ has no non-zero, bounded point derivations but $R(X)$ is not weakly amenable. What he proved is the following.

\begin{theorem} \label{Joel_Morris_Cheese}
For each $C>0$ there is a compact plane set $X$ obtained by deleting from the closed unit disk a countable union of open disks such that the unit circle $\mathbb{T}$ is a subset of $X,$ $R(X)$ has no non-zero$,$ bounded point derivations$,$ but for all $f,g$ in $R_0(X),$
\begin{equation} \label{cont_deri_dual}
\left\lvert\int_\mathbb{T} f'(z)g(z)dz\right\rvert \leq C \abs{f}_X \abs{g}_X.
\end{equation}
\end{theorem}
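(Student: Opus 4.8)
The plan is to build $X$ by a Wermer-type inductive deletion and to read the estimate \eqref{cont_deri_dual} as the continuity of the derivation $D\colon R(X)\to R(X)'$ given by $\langle D(f),g\rangle=\int_{\mathbb T}f'(z)g(z)\,\dm z$. A direct computation using $\langle \phi\cdot h,k\rangle=\langle\phi,hk\rangle$ shows $D(fg)=f\cdot D(g)+g\cdot D(f)$, while taking $f(z)=(z-a_k)\inv$ and $g(z)=z-a_k$ for any deleted disk $\ob{a_k}{r_k}$ gives $\langle D(f),g\rangle=-2\pi i\neq 0$; thus \eqref{cont_deri_dual}, together with the absence of bounded point derivations, exhibits $R(X)$ as a non weakly amenable algebra with trivial point derivations, which is the desired counterexample. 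First I would fix $C>0$ and construct $X=\cb{0}{1}\setminus\bigcup_k\ob{a_k}{r_k}$ as a \emph{classical} Swiss cheese set by iterating the mechanism behind Lemma~\ref{wermerlemma} (equivalently, applying the annular form of Proposition~\ref{Anulus_class_no_bpd}), arranging that every deleted disk has closure in $\ob{0}{1}$. This keeps $\mathbb T\subseteq X$, makes $X$ classical, and leaves $\sum_k r_k$ as small as we please.

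Property (2) I would split according to position. Every $\zeta\in\mathbb T$ is a peak point for $R(X)$, since $z\mapsto\tfrac12(1+\bar\zeta z)$ peaks at $\zeta$ over $\cb{0}{1}\supseteq X$; hence no non-zero bounded point derivation is supported on $\mathbb T$. At each $x\in X\setminus\mathbb T$ the blow-up furnished by Lemma~\ref{wermerlemma}, carried through the induction with blow-up targets tending to infinity, produces $f\in R_0(X)$ with $\abs{f'(x)}$ arbitrarily large relative to $\abs{f}_X$, so $R(X)$ admits no non-zero bounded point derivations; this is exactly the conclusion of Proposition~\ref{Anulus_class_no_bpd}, and Proposition~\ref{subsetprops}(2) makes the property stable under all further deletions.

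The heart of the matter is \eqref{cont_deri_dual}. For $f,g\in R_0(X)$ the function $f'g$ is rational with poles off $X\supseteq\mathbb T$, and its poles inside $\mathbb T$ lie in the deleted disks. Applying Cauchy's theorem to the region bounded by $\mathbb T$ and the circles $C_k=\partial\ob{a_k}{r_k}\subseteq X$ gives
\[
\int_{\mathbb T}f'(z)g(z)\,\dm z=\sum_{k}\int_{C_k}f'(z)g(z)\,\dm z ,
\]
a sum with only finitely many non-zero terms. Expanding $f,g$ in Laurent series about $a_k$ on the annulus $r_k\le\abs{z-a_k}<R_k$, where $R_k=\dist\!\bigl(\ob{a_k}{r_k},\,\C\setminus X\bigr)$, and bounding the coefficients by $\abs{f}_X,\abs{g}_X$ on the two bounding circles, yields a per-disk estimate of the shape $\bigl|\int_{C_k}f'g\,\dm z\bigr|\lesssim (r_k/R_k)\,\abs{f}_X\abs{g}_X$; written as a residue this is the Cauchy estimate $\abs{f'(p)}\le\abs{f}_X/R_k$ paired against the residue bound $\abs{\beta_p}\le r_k\abs{g}_X$ for the poles $p$ of $g$ in the disk.

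The main obstacle is that the disks must accumulate on $\mathbb T$: were a boundary collar disk-free, its interior points would be interior points of $X$ and would carry bounded point derivations by the Cauchy estimate, contradicting (2). Hence for disks near $\mathbb T$ the nearest component of $\C\setminus X$ is the exterior of the unit disk, so $R_k$ equals the (vanishing) distance to $\mathbb T$, and the sum of absolute values $\sum_k r_k/R_k$ diverges even though each ratio $r_k/R_k\sim\varepsilon_n/N_n$ is individually tiny. The bound \eqref{cont_deri_dual} must therefore be extracted collectively, using cancellation: the residues of a single bounded $g$ are strongly correlated, and the second inequality of Lemma~\ref{wermerlemma}, controlling the potential $\sum_p r_p/\abs{z-p}\le 50$, is the tool that converts the small individual ratios into a convergent and tunably small total. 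The decisive step is thus to choose the Wermer parameters $(\varepsilon_n)$ and the layering of disks toward $\mathbb T$ so that the aggregated boundary contribution is at most $C\abs{f}_X\abs{g}_X$; I expect this near-$\mathbb T$ cancellation estimate, rather than the construction of $X$ or the verification of (1) and (2), to be the crux of the argument.
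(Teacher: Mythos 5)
Your construction of $X$ and your verification of the point-derivation property follow the intended route (Wermer's construction arranged in annular layers, Proposition~\ref{Anulus_class_no_bpd}, peak points on $\mathbb{T}$), and the contour deformation $\int_{\mathbb{T}}f'(z)g(z)\,dz=\sum_k\int_{C_k}f'(z)g(z)\,dz$ is the right starting point for \eqref{cont_deri_dual}. But the estimate itself, which you correctly identify as the crux, is where the proposal has a genuine gap, and the gap comes from estimating against the wrong distance. Your per-disk bound is phrased in terms of $R_k=\dist(\ob{a_k}{r_k},\C\setminus X)$, the distance to the nearest hole; since in any such $X$ the deleted disks accumulate on one another, $\sum_k r_k/R_k$ is indeed uncontrollable, and you are driven to postulate a cancellation argument, supposedly powered by the potential bound ``$\leq 50$'' of Lemma~\ref{wermerlemma}. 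That bound, however, concerns only the special blow-up functions produced by Wermer's construction; it says nothing about arbitrary $f,g\in R_0(X)$, and no actual argument is given. As written, the central step of the proof is missing, and the proposed rescue mechanism is not the one that works.

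The correct mechanism --- the paper's Lemma~\ref{Joel_Cauchy}, quoted from \cite{feinstein2004} --- is a term-by-term absolute estimate requiring no cancellation at all: the contribution of the disk $D_k$ is at most $4\pi\abs{f}_X\abs{g}_X\,r_k/d_k^2$, where $d_k$ is the distance from $D_k$ to $\mathbb{T}$, i.e.\ to the contour of integration, not to $\C\setminus X$. The reason nearby holes are irrelevant is that one does not bound $f'$ pointwise on $\partial D_k$: instead one writes the principal part $p_k$ of $f$ at its (finitely many) poles in $D_k$ as a Cauchy integral over $\partial D_k$, where $\abs{f}\leq\abs{f}_X$ (this bound extends across pole-free deleted disks by the maximum principle, since their boundaries lie in $X$), and then estimates $p_k$ and $p_k'$ \emph{on} $\mathbb{T}$, at distance at least $d_k$ from the disk; this gives $\abs{p_k}\leq r_k\abs{f}_X/d_k$ and $\abs{p_k'}\leq r_k\abs{f}_X/d_k^2$ there, and the cross terms between principal parts vanish by pushing the contour to infinity. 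With this lemma, the whole problem reduces to arranging $\sum_k r_k/d_k^2\leq C/(4\pi)$, which the layered construction does outright: the disks in the layer at distance roughly $1/n$ from $\mathbb{T}$ can be given total radius at most $C2^{-n}/(4\pi(n+3)^2)$ --- this is exactly what the free parameter $\varepsilon$ in Proposition~\ref{Anulus_class_no_bpd} (``$\ar(A)<\varepsilon$'') is for --- so that layer contributes at most $C2^{-n}/(4\pi)$ and the total is as small as desired. Thus the coexistence of ``disks accumulate at $\mathbb{T}$'' with a convergent, tunably small sum, which you judged impossible and tried to repair by cancellation, is precisely what the construction achieves; compare the proof of Theorem~\ref{SEMorris}, where the same bookkeeping appears as the condition $\sum_{n=1}^\infty s_n/d_n^2\leq C$.
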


The existence of a non-zero bounded derivation from $R(X)$ to its dual space is an easy consequence of the estimate in (\ref{cont_deri_dual}) (see also \cite[p. 2390]{feinstein2004}). The theorem follows from the following lemma, which is also proved in \cite{feinstein2004}.

\begin{lemma}\label{Joel_Cauchy}
Let $D_n$ be a sequence of open disks in $\mathbb{C}$ (not necessarily pairwise disjoint) whose closures are contained in the open unit disk. Set $X=\overline{\Delta}\backslash \bigcup_{n=1}^\infty D_n$, where $\overline{\Delta}$ is the closed unit disk. Let $d_n$ be the distance from $D_n$ to $\mathbb{T}$ and let $r_n$ be the radius of $D_n$. Let $f$ and $g$ be in $R_0(X)$. Then
\[
\left\lvert\int_\mathbb{T} f'(z)g(z)dz\right\rvert\leq 4 \pi \abs{f}_X \abs{g}_X \sum_{n=1}^\infty \frac{r_n}{d_n^2}.
\]
\end{lemma}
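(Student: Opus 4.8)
The plan is to estimate the contour integral $\int_\mathbb{T} f'(z)g(z)\,dz$ by exploiting the fact that $f$ and $g$ are rational with poles only inside the deleted disks $D_n$. Since $f,g\in R_0(X)$, they are analytic in a neighbourhood of $\mathbb{T}$, so the integral is well-defined. The key idea is that $\int_\mathbb{T} f'(z)g(z)\,dz$ can be rewritten, via integration by parts around the closed contour $\mathbb{T}$, to move the derivative: since $\int_\mathbb{T}(fg)'\,dz=0$ (the integrand is an exact derivative integrated over a closed curve), we have $\int_\mathbb{T} f'g\,dz = -\int_\mathbb{T} fg'\,dz$, so the two forms are interchangeable and in particular $\int_\mathbb{T} f'g\,dz = \tfrac12\int_\mathbb{T}(f'g-fg')\,dz$. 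The real leverage, however, comes from deforming the contour: because $f'g$ is analytic on $X$ except at the poles inside the $D_n$, I expect to replace the integral over $\mathbb{T}$ by a sum of integrals over small circles surrounding each deleted disk, and then bound each such piece.

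The main steps I would carry out are as follows. First, I would express $f'(z)$ using the Cauchy integral formula for the derivative. For a point $z\in\mathbb{T}$, one can write $f'(z)$ as a contour integral of $f$ over a curve bounding $X$ near the relevant poles; concretely, since the poles of $f$ lie in $\bigcup_n D_n$ and $f$ is bounded by $\abs{f}_X$ on $X$, I would control $f'(z)$ on $\mathbb{T}$ in terms of the distances $d_n$ from the disks to the circle. The heuristic is that a pole of $f$ sitting inside $D_n$, at distance roughly $d_n$ from $\mathbb{T}$, contributes to $f'(z)$ a term of size comparable to $\abs{f}_X\, r_n/d_n^{2}$ after integrating the Cauchy kernel's derivative (which carries a $1/(\zeta-z)^2$ factor) around a circle of radius comparable to $r_n$. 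Summing the contributions of all disks and integrating the resulting bound for $g(z)f'(z)$ around $\mathbb{T}$ (of length $2\pi$) should produce exactly the factor $4\pi\abs{f}_X\abs{g}_X\sum_n r_n/d_n^{2}$.

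More precisely, the cleanest route is to write the integral as a sum over the boundaries of the deleted disks. Using that $f'g$ is analytic on $\overline\Delta\setminus\bigcup_n D_n$ and applying Cauchy's theorem, I would deform $\mathbb{T}$ inward to the union of small positively-oriented circles $\partial D_n$ (taking care with the non-disjoint case by enlarging slightly or using a standard limiting/compactness argument to reduce to finitely many disks at a time), obtaining
\[
\int_\mathbb{T} f'(z)g(z)\,dz = \sum_{n=1}^\infty \int_{\partial D_n} f'(z)g(z)\,dz.
\]
On each $\partial D_n$ I would bound $\abs{g(z)}\leq\abs{g}_X$ and estimate $\int_{\partial D_n} f'(z)\,dz$; here integrating $f'$ over the boundary circle and using that $\abs{f}\le\abs{f}_X$ on $X$ together with a Cauchy-type estimate for the derivative at distance $\gtrsim d_n$ yields a bound of order $\abs{f}_X\, r_n/d_n^{2}$ per disk (with length $2\pi r_n$ of $\partial D_n$ supplying one power of $r_n$ and the derivative estimate supplying $1/d_n^2$ up to the radius).

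The main obstacle I anticipate is twofold. First, the disks $D_n$ are \emph{not} assumed pairwise disjoint, so the naive contour deformation onto disjoint circles $\partial D_n$ is not immediately valid; I would need to handle the overlaps carefully, either by choosing a fixed enclosing contour for each pole that avoids double-counting, or by first proving the estimate for finitely many disks (where one can perturb to a disjoint configuration or take a common boundary of the union) and then passing to the limit using uniform convergence on $\mathbb{T}$ and the convergence of $\sum_n r_n/d_n^{2}$. Second, getting the constant exactly $4\pi$ rather than some larger absolute constant requires a careful, tight application of the Cauchy estimate for $f'$ in terms of $\abs{f}_X$ and the geometry $d_n, r_n$; this is where the bulk of the routine-but-delicate computation lies, and it is the step I would expect to require the most care to make rigorous.
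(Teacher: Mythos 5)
First, a point of reference: the paper under review does not actually prove this lemma; it quotes it from \cite{feinstein2004}. So your proposal has to be measured against the argument given there (and against what any correct argument must accomplish).

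The decisive gap is your per-circle estimate, and it is not repairable. You propose to bound $\int_{\partial D_n} f'g\,dz$ by a length-times-sup bound, controlling $\sup_{\partial D_n}\abs{f'}$ by a ``Cauchy-type estimate for the derivative at distance $\gtrsim d_n$.'' But $d_n$ is the distance from $D_n$ to $\mathbb{T}$, not the distance from $\partial D_n$ to the poles of $f$: the poles of $f$ may sit immediately inside $\partial D_n$, and $f$ is only controlled on $X$, which has empty interior, so no Cauchy estimate for $f'$ is available on $\partial D_n$ at all. Concretely, let $X=\overline{\Delta}\setminus\ob{0}{1/2}$ and $f(z)=\varepsilon/(z-p)$ with $\abs{p}=1/2-\varepsilon$; then $\abs{f}_X\le 1$ while $\sup_{\partial D_1}\abs{f'}=1/\varepsilon$, so your bound $2\pi r_1\cdot\sup_{\partial D_1}\abs{f'}\cdot\abs{g}_X$ blows up as $\varepsilon\to 0$, even though the true integral stays small (it is a residue, finite only because of cancellation, not smallness of the integrand). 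Note also that the quantity you propose to estimate in isolation, $\int_{\partial D_n}f'(z)\,dz$, is exactly $0$. The factor $1/d_n^2$ in the lemma has a different origin: it is the Cauchy kernel $1/(\zeta-z)^2$ with $\zeta$ on contours near the deleted disks and $z\in\mathbb{T}$. The proof in \cite{feinstein2004} splits $f=f_1+f_2$ and $g=g_1+g_2$, where $f_1,g_1$ are the sums of the principal parts at the poles lying in $\Delta$ (vanishing at $\infty$) and $f_2,g_2$ are analytic near $\overline{\Delta}$; one checks that the diagonal terms vanish ($\int_\mathbb{T} f_1'g_1\,dz=0$ by decay at infinity, $\int_\mathbb{T} f_2'g_2\,dz=0$ by Cauchy's theorem), so that after an integration by parts
\[
\int_\mathbb{T} f'g\,dz=\int_\mathbb{T} f_1'g\,dz-\int_\mathbb{T} f g_1'\,dz,
\]
and then proves the pointwise bounds $\abs{f_1'(z)}\le\abs{f}_X\sum_n r_n/d_n^2$ and $\abs{g_1'(z)}\le\abs{g}_X\sum_n r_n/d_n^2$ for $z\in\mathbb{T}$, by representing the principal parts as Cauchy integrals of $f$ (resp.\ $g$) over contours lying in $X$ close to the disks, where $\abs{\zeta-z}\ge d_n$. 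All final estimates take place on $\mathbb{T}$, where $\abs{f}\le\abs{f}_X$ and $\abs{g}\le\abs{g}_X$ are automatic, and the constant $4\pi$ falls out as $2\pi+2\pi$ from the two cross terms; it is not obtained by tightening a scheme like yours.

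The second gap is the one you flag but do not resolve, and your proposed fixes fail. For overlapping disks the cycle $\sum_n\partial D_n$ is not homologous to $\mathbb{T}$ off the poles (a pole in $D_m\cap D_n$ is encircled twice), so your displayed identity is false as written. Taking ``a common boundary of the union'' of the finitely many pole-containing disks does not help, because that boundary need not lie in $X$: it can pass through other deleted disks, where not even $\abs{g}\le\abs{g}_X$ is available. Reducing to finitely many disks and passing to a limit also fails, because deleting fewer disks replaces $X$ by a larger set $X'$, and $\abs{f}_{X'}\ge\abs{f}_X$, so an estimate proved for $X'$ does not imply the stated one. Handling the overlaps is precisely the delicate point: in \cite{feinstein2004} the only contours used are $\mathbb{T}$ itself and contours built inside $X$ (essentially from the boundary of the union of all the disks, combined with a maximum-principle argument), arranged per disk so that the arc coming from $D_n$ has length at most $2\pi r_n$ and distance at least $d_n$ from $\mathbb{T}$. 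Without the inner/outer decomposition and this choice of contours, the uncontrollable quantities ($f'$ or $g$ on circles that meet deleted disks) cannot be avoided, so the proposal does not yield the lemma.
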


In this section we prove a new version of Theorem \ref{Joel_Morris_Cheese} where the Swiss cheese set $X$ is classical. This follows our general classicalisation scheme as discussed in \cite{FMY}. The main theorem of this section is the following.

\begin{theorem} \label{SEMorris}
For each $C>0,$ there exists a classical abstract Swiss cheese $B=((b_n,s_n))$ such that $R(X_B)$ has no non-zero bounded point derivations, $s_0=1,$ the unit circle $\mathbb{T}\subseteq X_B,$  and $\sum_{n=1}^\infty s_n/d_n^2 \leq C,$ where $d_n$ is the distance from the disk $\ob{b_n}{s_n}$ to $\mathbb{T}$ if $s_n>0$ and $d_n=1$ if $s_n=0$.
\end{theorem}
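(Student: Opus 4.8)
The plan is to avoid constructing a new cheese from scratch and instead to \emph{classicalise} the non-classical cheese already produced by Feinstein, arranging the classicalisation so that it neither moves the deleted disks appreciably closer to $\mathbb{T}$ nor inflates the weighted sum $\sum_n s_n/d_n^2$. The crucial simplification is that the absence of non-zero bounded point derivations is inherited by subsets: by Proposition~\ref{subsetprops}(2), if $X_B\subseteq X_A$ and $R(X_A)$ has no non-zero bounded point derivations, then neither does $R(X_B)$. Hence the derivation-theoretic content is obtained for free from Theorem~\ref{Joel_Morris_Cheese}, and the entire task reduces to a geometric one: classicalise while keeping every disk's distance to $\mathbb{T}$ comparable to its original value and keeping the localised radius sums from growing.

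Concretely, I would first invoke (the construction behind) Theorem~\ref{Joel_Morris_Cheese}. Since the estimate \eqref{cont_deri_dual} is obtained from Lemma~\ref{Joel_Cauchy} through the quantity $\sum_n r_n/d_n^2$, that construction in fact yields, for any prescribed $t>0$, an abstract Swiss cheese $A=((a_n,r_n))$ with $a_0=0$, $r_0=1$, each $\cb{a_n}{r_n}$ inside the open unit disk, $\mathbb{T}\subseteq X_A$, $R(X_A)$ admitting no non-zero bounded point derivations, and $\sum_{n\geq1}r_n/d_n^2\leq t$. I fix $t$ to be a small fraction of $C$, leaving room for a bounded distortion below. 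Replacing $A$ by the redundancy-free cheese supplied by Lemma~\ref{nonredundantcheese} (which preserves $X_A$ and the outer disk and can only decrease each $\rho_U$), I may assume $A$ is redundancy-free with $\rho(A)<\infty$ and $\delta_1(A)=1-\sum_n r_n>0$. I then cover the error set $E(A)$ by a countable, pairwise disjoint, admissible family $((K_n,M_n))$ in the sense of Proposition~\ref{localclassicalisation}, where each $K_n$ is a small compact piece containing a cluster of the (tiny) overlaps of $E(A)$ and each neighbourhood radius satisfies $M_n\leq \tfrac12\dist(K_n,\mathbb{T})$. Admissibility $\rho_{U_n}(A)<M_n/2$ is arranged because the total radius sum is as small as we wish, and $M_n\leq\tfrac12\dist(K_n,\mathbb{T})$ guarantees $\overline{U_n}\subseteq\ob{0}{1}$, so the regions stay away from $\mathbb{T}$. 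Applying Proposition~\ref{localclassicalisation} produces a classical abstract Swiss cheese $B=((b_n,s_n))$ with $X_B\subseteq X_A$, $\delta_1(B)\geq\delta_1(A)$, $\rho_{U_n}(B)\leq\rho_{U_n}(A)$ for all $n$, and every disk of $A$ meeting none of the $U_n$ left unchanged (condition~1). Because all modifications are confined to the $U_n$, each disk of $B$ either coincides with an original disk or lies within some $\overline{U_n}$, whence its distance $d$ to $\mathbb{T}$ satisfies $d\geq \tfrac12\dist(K_n,\mathbb{T})$, i.e. no disk's distance shrinks by more than a bounded factor. Combining this distance control with $\rho_{U_n}(B)\leq\rho_{U_n}(A)$ and the fact that every $A$-disk in $H_A(U_n)$ already has $d_j$ comparable to $\dist(K_n,\mathbb{T})$, the contribution of region $n$ to $\sum s_n/d_n^2$ is bounded by a fixed multiple of $\sum_{j\in H_A(U_n)}r_j/d_j^2$; summing over $n$ and using the disjointness of the $U_n$ gives $\sum_n s_n/d_n^2\leq (\text{const})\cdot t\leq C$ once $t$ was chosen small enough. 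Finally $s_0=1$ and $\mathbb{T}\subseteq X_B$, since nothing protrudes from $\ob{0}{1}$ (so the outer disk is never pulled in) and no disk is moved to meet $\mathbb{T}$; and $R(X_B)$ has no non-zero bounded point derivations by Proposition~\ref{subsetprops}(2).

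The main obstacle is the construction of the admissible family in the third step: it must simultaneously be global enough to cover the whole error set $E(A)$ (so that the output is genuinely classical rather than merely semiclassical) and local enough that no combined or pulled-in disk crosses into a region substantially closer to $\mathbb{T}$, since such migration would shrink some $d_n$ and could make $s_n/d_n^2$ blow up. The governing invariant is precisely that classicalisation distorts each disk's distance to $\mathbb{T}$ by at most a bounded factor while not increasing the localised radius sums; the delicate point is that the overlaps comprising $E(A)$ may cluster, so one must group them into pairwise disjoint compacta $K_n$ while keeping each $M_n$ both larger than the radii of the disks being combined there (so the minimal enclosing disk still lies in $U_n$) and smaller than $\tfrac12\dist(K_n,\mathbb{T})$ (so distances and the containment $\overline{U_n}\subseteq\ob{0}{1}$ are preserved). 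Verifying that these competing demands can always be met—crucially using that the total radius sum of $A$, and hence the size and sparsity of $E(A)$, may be taken arbitrarily small—is the technical core of the argument.
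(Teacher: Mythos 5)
Your high-level scheme is in fact the paper's own: pass to a redundancy-free cheese (Lemma~\ref{nonredundantcheese}), apply controlled classicalisation (Proposition~\ref{localclassicalisation}) to a countable admissible family $((K_n,M_n))$, split $\sum_n s_n/d_n^2$ into the disks that survive unchanged and those meeting some $U_n$, and obtain the point-derivation statement from $X_B\subseteq X_A$ via Proposition~\ref{subsetprops}(2) (the paper leaves this last point implicit, and making it explicit is a genuine merit of your write-up). The difference is the starting object: you take the set of Theorem~\ref{Joel_Morris_Cheese} as a black box, using only that $R(X_A)$ has no non-zero bounded point derivations, $\mathbb{T}\subseteq X_A$, the deleted closed disks lie in $\ob{0}{1}$, and $\sum_n r_n/d_n^2\leq t$ with $t$ small. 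The gap is precisely the step you defer as ``the technical core'': the existence of an admissible family covering $E(A)$ with $M_n$ comparable to $\dist(K_n,\mathbb{T})$. This does \emph{not} follow from smallness of the radius sums; it is false at the level of generality at which you are arguing.

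Here is the obstruction. Admissibility forces the $U_n$ to be pairwise disjoint with $\overline{U_n}\subseteq\ob{0}{1}$, and since $K_m\subseteq U_m$, any point of $E(A)$ lying in $U_n$ must lie in $K_n$ itself. Suppose now that $E(A)$ is dense in a connected set $\Gamma$ (say a radial segment) whose closure meets $\mathbb{T}$. For each $n$, the set $\Gamma\cap U_n$ is open in $\Gamma$ and, by density, is contained in $\overline{E(A)\cap U_n}\subseteq K_n$; hence $\Gamma\cap U_n=\Gamma\cap K_n$ is also closed in $\Gamma$. Connectedness gives $\Gamma\cap U_n\in\{\emptyset,\Gamma\}$, so a single $\overline{U_{n_0}}$ contains $\overline{\Gamma}$ and therefore meets $\mathbb{T}$, contradicting $\overline{U_{n_0}}\subseteq\ob{0}{1}$. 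Such configurations are consistent with every hypothesis you import: starting from Feinstein's set one may delete, in addition, countably many pairs of tiny tangent disks whose tangency points are dense along a radius accumulating at $1\in\mathbb{T}$; deleting more disks only shrinks the set, so Proposition~\ref{subsetprops}(2) preserves the absence of bounded point derivations, and the extra contribution to $\sum_n r_n/d_n^2$ can be made arbitrarily small. Consequently no argument using only your black-box properties can produce the admissible family; you are forced to open up the construction behind Theorem~\ref{Joel_Morris_Cheese} and verify that its error set is confined to disjoint thin annuli around the circles $\abs{z}=(n+1)/(n+2)$. That verification is exactly what the paper's proof supplies: it rebuilds the cheese from classical annular Wermer cheeses $A^{(n)}$ (Proposition~\ref{Anulus_class_no_bpd}) and transition cheeses $B^{(n)}$ straddling those circles, with explicit bounds on $\ar(A^{(n)})$ and $\ar(B^{(n)})$, so that $E(A')\subseteq\bigcup_n K_n$ holds by construction and the admissibility inequality $\rho_{U_n}(A')<M_n/2$ is a two-line computation. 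Your distance-comparability estimates in the final step are essentially sound (modulo replacing $M_n\leq\tfrac12\dist(K_n,\mathbb{T})$ by a slightly smaller constant, since merged disks may protrude into $U(K_n,2M_n)$), but without the covering they have nothing to act on.
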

\begin{proof}
We construct a classical abstract Swiss cheese $B=((b_n,s_n))$ such that $R(X_B)$ has no nonzero bounded point derivations and
\[\sum_{n=1}^\infty \frac{s_n}{d_n^2}<C.\]

For each $n\in \mathbb{N}$, let $A^{(n)}=((a_m^{(n)},r_m^{(n)}))$ be a classical annular Swiss cheese with $r_0^{(n)}=(n+1)/(n+2)$, $r_1^{(n)}=n/(n+1)$, $a_0^{(n)}=a_1^{(n)}=0$,
\[ \ar(A^{(n)})<\min \left\{ \frac C {2^{n+3}(n+3)^2},  \frac 1{12(n+3)^2} \right\}, \]
and such that $R(X_{A^{(n)}})$ has no non-zero bounded point derivations.

For each $n\geq 1$, set
\[ K_n = \left\{ z\in \mathbb{C}: \abs{z}\in \left[ \frac {n+1}{n+2}-\frac 1{4(n+3)^2}, \frac{n+1}{n+2}+\frac{1}{4(n+3)^2}\right]\right\}.\]
We also choose, for each $n\geq 1$, a sequence of open disks such that the annular Swiss cheese
$ B^{(n)}=((b_m^{(n)},s_m^{(n)}))$ with $b_0^{(n)}=b_1^{(n)}=0$, \[ s_0^{(n)}=\frac{n+1}{n+2}+\frac 1{4(n+3)^2},\] \[ s_1^{(n)}=\frac{n+1}{n+2}-\frac 1{4(n+3)^2},\]
\[ \ar(B^{(n)})<\min \left\{ \frac C{2^{n+3}(n+3)^2}, \frac 1{12(n+3)^2}\right\},\]
is classical, and such that $R(X_{B^{(n)}})$ has no non-zero bounded point derivations.

We construct an abstract Swiss cheese $A=((a_m,r_m))$ such that $a_0=0$, $r_0=1$, $(r_m)$ is an enumeration of $(r_m^{(n)})_{n\geq 1, m\geq 2}$ and $(s_m^{(n)})_{n\geq 1, m\geq 2}$, and $(a_m)$ is the enumeration of $(a_m^{(n)})_{n\geq 1,m\geq 2}$ and $(b_m^{(n)})_{n\geq 1,m\geq 2}$ corresponding to $(r_m)$. By Lemma~\ref{nonredundantcheese}, there exists a redundancy-free abstract Swiss cheese $A'=((a'_m,r'_m)) \in \mathcal N$ such that $X_{A'} = X_A$ and $\rho_U(A')\leq \rho_U(A)$ for all open subset $U$ of $\mathbb C$. Notice that for fixed $n$, both disks $\ob{a_m^{(n)}}{r_m^{(n)}}$ and $\ob{b_m^{(n)}}{s_m^{(n)}}$ are contained in the disk $\ob{0}{(n+2)/(n+3)}$.
We have
\[ \sum_{m=1}^\infty \frac {r'_m}{(d'_m)^2} \leq \sum_{n=1}^\infty \left( (n+3)^2 \sum_{m=2}^\infty (r_m^{(n)} + s_m^{(n)})\right) \leq \frac C 4,\] where $d'_m$ is the distance from the disk $\ob{a'_m}{r'_m}$ to $\mathbb T$ if $r'_m>0$ and $d'_m=1$ otherwise.

Set $M_n = 1/(4(n+3)^2)$, and let $U_n=\{z\in \mathbb{C} : \dist(z,K_n)< M_n\}.$ We observe that
\begin{align} \label{8_1} \rho_{U_n}(A')\leq \rho_{U_n}(A) &\leq \ar(A^{(n)})+\ar(A^{(n+1)})+\ar(B^{(n)}) \\ \nonumber &<\min\left\{\frac 1{4(n+3)^2}, \frac{3C}{2^{n+3}(n+3)^2}\right\}. \nonumber
\end{align}
Then $((K_n,M_n))_{n\in \mathbb N}$ is an admissible collection of pairs for the abstract Swiss cheese $A'$, which satisfies the conditions in Proposition~\ref{localclassicalisation}. See Figure \ref{semorrisconjdiagram} for an illustration of a resulting pair $(K_n,U_n)$. Thus, by Proposition \ref{localclassicalisation}, there exists a classical abstract Swiss cheese $B=((b_n,s_n))$ such that $\delta_1(B)\geq\delta_1(A')$, $X_B\subseteq X_{A'}$, $b_0=0$, $s_0=1$ and $\rho_{U_n}(B)\leq \rho_{U_n}(A')$ for all $n\in \mathbb N$.

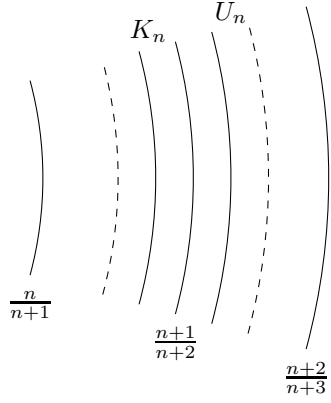
\begin{figure}[htbp]
\centering
\begin{tikzpicture}
\draw (4.8296,-1.2941) arc [radius=5, start angle=-15, end angle= 15];
\node at (4.8296,-1.7) {$\frac{n}{n+1}$};
\draw (6.7615,-1.8117) arc [radius=7, start angle=-15, end angle= 15];
\node at (6.7615,-2.2) {$\frac{n+1}{n+2}$};
\draw (8.5001,-2.2776) arc [radius=8.8, start angle=-15, end angle= 15];
\node at (8.5001,-2.67) {$\frac{n+2}{n+3}$};
\draw (7.2444,-1.9411) arc [radius=7.5, start angle=-15, end angle= 15]; 
\draw (6.2785,-1.6823) arc [radius=6.5, start angle=-15, end angle= 15];
\node at (6.4,1.95) {$K_n$};
\draw[dashed] (5.7956,-1.5529) arc [radius=6, start angle=-15, end angle= 15];
\draw[dashed] (7.7274,-2.0706) arc [radius=8, start angle=-15, end angle= 15];
\node at (7.5,2.2) {$U_n$};
\end{tikzpicture}
\caption{A pair $(K_n,U_n)$ as in the proof of Theorem~\ref{SEMorris}.}
\label{semorrisconjdiagram}
\end{figure}

We have
\[
\sum_{n=1}^\infty \frac{s_n}{d_n^2} = \sum_{n\in S_1} \frac{s_n}{d_n^2} + \sum_{n\in S_2} \frac{s_n}{d_n^2},
\]
where $S_2 := \bigcup_{n=1}^\infty H_B( U_n)$ and $S_1= \mathbb{N}\backslash S_2$; note here that $H_B(U_m)\cap H_B(U_n) = \emptyset$ for all $m\neq n$ . For all $n\in S_1$ we have $\ob{b_n}{s_n}=\ob{a'_m}{r'_m}$ for some $m\geq 1$. Then we have
\begin{equation} \label{summation_1} \sum_{n\in S_1} \frac{s_n}{d_n^2} \leq \sum_{n=1}^\infty \frac{r'_n}{(d'_n)^2}\leq \frac C4.\end{equation} On the other hand, from the construction we have
\[ \sum_{n \in S_2} \frac{s_n}{d_n^2} = \sum_{n=1}^\infty \left( \sum_{m\in S_2^{(n)}} \frac{s_m}{d_m^2}\right),\]
where $S_2^{(n)} := H_A(U_n).$
For each $m\in S_2^{(n)}$, since \[s_m\leq \rho_{U_n}(B)\leq \rho_{U_n}(A')<1/(4(n+3)^2)\] by \eqref{8_1} we have $\cb{b_m}{s_m}\subseteq \ob{0}{(n+2)/(n+3)}$, so $d_m>1/(n+3)$. Again by \eqref{8_1} we observe that
\[ \sum_{m\in S_2^{(n)}} s_m = \rho_{U_n}(B)\leq \rho_{U_n}(A')<\frac {3C}{2^{n+3}(n+3)^2}.\]
Therefore we have
\[ \sum_{m\in S_2} \frac{s_m}{d_m^2} = \sum_{n=1}^\infty \left(\sum_{m\in S_2^{(n)}} \frac{s_m}{d_m^2}\right) \leq \sum_{n=1}^\infty \sum_{m\in S_2^{(n)}}(n+3)^2s_m <\frac C2.\]
Combining with \eqref{summation_1} we conclude that
\[ \sum_{n=1}^\infty \frac{s_n}{d_n^2}<C.\]
This concludes the proof.
\end{proof}

\section{Open questions}
We raise the following open questions. Let $X$ be a compact plane set.

\begin{question}
Let $B$ be the classical abstract Swiss cheese constructed in Theorem~$\ref{SEMorris}$. Can $R(X_B)$ be regular? Must $R(X_B)$ be regular?
\end{question}

\begin{question}
If $R(X)$ has no non-zero bounded point derivations, must $R(X)$ be regular$?$
\end{question}

\begin{question}
If $R(X)$ is weakly amenable, must $R(X)$ be trivial? The same question is open for uniform algebras.
\end{question}


\providecommand{\bysame}{\leavevmode\hbox to3em{\hrulefill}\thinspace}
\providecommand{\MR}{\relax\ifhmode\unskip\space\fi MR }
\providecommand{\MRhref}[2]{%
  \href{http://www.ams.org/mathscinet-getitem?mr=#1}{#2}
}
\providecommand{\href}[2]{#2}

\end{document}